\newtheorem{theorem}{Theorem}
\newtheorem{property}[theorem]{Property}
\newtheorem{definition}[theorem]{Definition}
\newcommand{\name}{Rival\xspace}
\newcommand{\herbie}{Herbie\xspace}
\newcommand{\F}[1]{\text{\textsf{#1}}}
\newcommand{\K}[1]{\text{\textbf{#1}}}
\newenvironment{block}{\left|\begin{array}{l}}{\end{array}\right.}
\newcommand{\SearchIterations}{14\xspace}
\renewcommand{\ll}{\llbracket}
\newcommand{\rr}{\rrbracket}
\newcommand{\mathN}{\text{\texttt{N}}\xspace}
\newcommand{\percentpointssearchsaves}{74.6\%\xspace}
\newcommand{\pointssearchsavesfromrivaldomainerror}{9142\xspace}
\newcommand{\pointssearchsavesfromrivalunsamplable}{668\xspace}
\newcommand{\overallmathematicacrash}{16\xspace}
\newcommand{\overallmathematicacrashtimeoutormemory}{64\xspace}
\newcommand{\overallallpoints}{126720\xspace}
\newcommand{\overallmathematicaunsamplable}{7617\xspace}
\newcommand{\overallrivalunsamplable}{746\xspace}
\newcommand{\overallmathematicatimesrivalunknown}{1.4$\times$\xspace}
\newcommand{\TotalHardPoints}{19895\xspace}
\newcommand{\timesworsemathematicaoverallunsamplable}{10.2$\times$\xspace}
\newcommand{\rivalhardsamplesorerror}{18824\xspace}%
\newcommand{\mathematicahardsamplesorerror}{11744\xspace}
\newcommand{\percentrivalhardbetter}{60.3\%\xspace}
\newcommand{\rivalhardsamples}{5872\xspace}
\newcommand{\mathematicahardsamples}{24\xspace}
\newcommand{\rivalharderror}{12952\xspace}
\newcommand{\mathematicaharderror}{11720\xspace}
\newcommand{\rivalhardunresolved}{1071\xspace}
\newcommand{\mathematicahardunresolved}{8151\xspace}
\newcommand{\totalrivalsamplesorerror}{5877\xspace}%
\newcommand{\totalmathematicasamplesorerror}{26\xspace}
\newcommand{\totalrivalsamplablemathematicaunsamplable}{5830\xspace}
\newcommand{\timesrivalfastermathematica}{9.87$\times$\xspace}
\newcommand{\timesrivalfastermathematicawithoutcrashestimeouts}{9.03$\times$\xspace}
\newcommand{\percentfasterthanmpfi}{(19.2\%)\xspace}
\newcommand{\TotalHerbieBenchmarks}{481\xspace}
\newcommand{\TotalFPBenchBenchmarks}{126\xspace}
\newcommand{\TotalDemoBenchmarks}{4888\xspace}
\newcommand{\MinHerbieBenchmarkVariables}{0\xspace}
\newcommand{\MaxHerbieBenchmarkVariables}{16\xspace}
\newcommand{\MinHerbieBenchmarkOperations}{0\xspace}
\newcommand{\MaxHerbieBenchmarkOperations}{89\xspace}
\newcommand{\TotalWithConditionals}{4\xspace}
\newcommand{\TotalWithPrecondition}{47\xspace}
\newcommand{\PercentFailPreconditionOfBenchmarksWithPrecondition}{0.1\%\xspace}
\newcommand{\NumBenchmarksWithUnsamplablePointsSearchDisabled}{21\xspace}%
\newcommand{\PercentUnsamplablePointsUndetectedSearchDisabled}{19.0\%\xspace}
\newcommand{\SearchInvalidSampleReduction}{68.6\%\xspace}
\newcommand{\PercentSpaceT}{54.9\%\xspace}
\newcommand{\PercentSpaceF}{22.8\%\xspace}
\newcommand{\GuaranteeSampleChance}{71.1\%\xspace}
\newcommand{\PercentUnsamplableDetected}{81.0\%\xspace}
\newcommand{\NumberOfSearchIterations}{15\xspace}
\newcommand{\HammingPercentUnsamplableDetectedSearchDisabled}{100.0\%\xspace}
\newcommand{\FPBenchValidProbability}{93.8\%\xspace}
\newcommand{\FPBenchF}{72.6\%\xspace}
\begin{document}

\title{An Interval Arithmetic for Robust Error Estimation}

\author{Oliver Flatt}
\affiliation{
  \institution{University of Utah}
  \streetaddress{201 Presidents' Cir}
  \city{Salt Lake City}
  \state{UT}
  \country{USA}
}
\email{o.flatt@utah.edu}

\author{Pavel Panchekha}
\affiliation{
  \institution{University of Utah}
  \streetaddress{201 Presidents' Cir}
  \city{Salt Lake City}
  \state{UT}
  \country{USA}
  \postcode{84112-0090}
}
\email{pavpan@cs.utah.edu}

\begin{abstract}
  Interval arithmetic is a simple way
    to compute a mathematical expression
    to an arbitrary accuracy,
    widely used for verifying floating-point computations.
  Yet this simplicity belies challenges.
  Some inputs
    violate preconditions or cause domain errors.
  Others cause the algorithm to enter an infinite loop
    and fail to compute a ground truth.
  Plus, finding valid inputs is itself a challenge
    when invalid and unsamplable points
    make up the vast majority of the input space.
  These issues can make interval arithmetic
    brittle and temperamental.
  
  This paper introduces three extensions to interval arithmetic
    to address these challenges.
  \textit{Error intervals}
    express rich notions of input validity
    and indicate whether
    all or some points in an interval
    violate implicit or explicit preconditions.
  \textit{Movability flags} detect futile recomputations
    and prevent timeouts
    by indicating whether a higher-precision recomputation
    will yield a more accurate result.
  And \textit{input search} restricts sampling
    to valid, samplable points,
    so they are easier to find.
  We compare these extensions  to the
    state-of-the-art technical computing software Mathematica, 
    and demonstrate that our extensions are able
    to resolve \percentrivalhardbetter more challenging inputs,
    return \timesworsemathematicaoverallunsamplable fewer completely indeterminate results,
    and avoid \overallmathematicacrashtimeoutormemory cases of fatal error.
  
\end{abstract}

\begin{CCSXML}
  <ccs2012>
  <concept>
  <concept_id>10002950.10003714.10003715.10003726</concept_id>
  <concept_desc>Mathematics of computing~Arbitrary-precision arithmetic</concept_desc>
  <concept_significance>300</concept_significance>
  </concept>
  <concept>
  <concept_id>10010147.10010148.10010164.10010165</concept_id>
  <concept_desc>Computing methodologies~Representation of exact numbers</concept_desc>
  <concept_significance>100</concept_significance>
  </concept>
  </ccs2012>
\end{CCSXML}
  
\ccsdesc[300]{Mathematics of computing~Arbitrary-precision arithmetic}
\ccsdesc[100]{Computing methodologies~Representation of exact numbers}

\keywords{Interval arithmetic, floating point error, overflow, computed reals}

\maketitle

\section{Introduction}

Floating-point arithmetic
  in scientific, financial, and engineering applications
  suffers from rounding error:
  the results of a floating-point expression can differ starkly
  from those of the analogous mathematical expression~%
  \cite{berkeley00-needle-like,kahan-java-hurts,cse14-practical-fp,book87-nmse}.
Rounding error has been responsible for
  scientific retractions~\cite{num-issues-in-stat,num-replication},
  mispriced financial indices~\cite{distort-stock,euro-rounding},
  miscounted votes~\cite{round-elections},
  and wartime casualties~\cite{patriot}.
Floating-point code thus needs careful validation;
  sound upper bounds~\cite{daisy,fptaylor,satire},
  semidefinite optimization~\cite{real2float},
  input generation~\cite{s3fp,fpgen},
  and statistical methods~\cite{perturbing-numerical}
  have all been proposed for such validation.
Sampling-based error estimation,
  which can estimate typical, not worst-case, error
  is especially widely used%
  ~\cite{precimonious,herbie,herbgrind,api-for-real-numbers}.

Consider the core task of computing the error
  for a floating-point computation at a given point.
This requires computing an error-free ground-truth for a mathematical computation.
Interval arithmetic%
  ~\cite{api-for-real-numbers,cse14-practical-fp,boost-ivals,moore-ivals,interval-constructive}
  and its many variations%
  ~\cite{computable-reals,boehm-idea,applied-interval-analysis}
 is the traditional approach to this problem.
Interval arithmetic provides sound bounds on the error-free ground truth,
  and allows for refining those bounds by recomputing at higher precision,
  until the ground truth can be estimated to any accuracy.
Computing the ground truth this way on a large number of sampled points gives
  a good estimate of the floating-point error of a computation.

In practice, however, even state of the art implementations
  struggle on invalid or particularly challenging input points.
The bounds are meaningless for invalid inputs;
  the recomputation process may enter an infinite loop;
  and valid inputs may be too hard to find.
Since error estimation requires sampling a large number of points,
  these challenges arise frequently.
As a result, even state of the art interval arithmetic implementations
  enter infinite loops, give up too early, or in some exceptional cases
  even experience memory exhaustion and fatal errors.

We identify three particular challenges
  for sampling-based error estimation:
  invalid input points,
  futile recomputations,
  and low measure.
\textit{Invalid input points} violate explicit or implicit preconditions
  and cannot meaningfully be evaluated to a ground truth value;
  consider the square root of a negative number.
Validity must thus be soundly tracked and checked,
  especially for inputs at the borderline between valid and invalid.
\textit{Futile recomputation} afflicts some valid inputs:
  interval arithmetic recomputes with ever-higher precision
  but without converging on a ground truth answer.
These recomputations must thus be soundly cut off early,
  warning the user that a ground truth value cannot be computed.
\textit{Low measure} means valid inputs
  make up a small proportion of the input space,
  making it difficult for applications to find sufficiently many.
Regions of valid input points must thus be efficiently and soundly identified.
No general-purpose strategy exists to address these independent issues;
  yet all three must be addressed for interval arithmetic to be robust.

This paper proposes three improvements
  to interval arithmetic that address these issues.
First, \textit{error intervals} determine whether
  all or some of the points in an interval
  violate implicit or explicit preconditions.
Second, \textit{movability flags}
  detect most input values for which recomputation will not converge,
  warning the user and avoiding futile recomputation.
Third, \textit{input search}
  discards invalid regions of the input space,
  focusing sampling on valid points.
Combined, our improvements
  soundly address the issues identified above
  and make interval analysis more robust.

We implement these improvements
  in the new interval arithmetic library called \name
  and compare \name to Mathematica's analogous \mathN function
  on a benchmark suite of \TotalHerbieBenchmarks~floating-point expressions.
\name produces results in dramatically more cases than Mathematica:
  \name is able to resolve \rivalhardsamplesorerror inputs,
  while Mathematica is only able to resolve \mathematicahardsamplesorerror (\percentrivalhardbetter better).
In only \overallrivalunsamplable cases does \name return a completely indeterminate result;
  Mathematica, however, does so in \overallmathematicaunsamplable cases (\timesworsemathematicaoverallunsamplable worse),
  and among those cases sometimes
  enters an infinite loop, runs out of memory, or hard crashes (\overallmathematicacrashtimeoutormemory cases).
Furthermore, \name's input search
  saves \percentpointssearchsaves of invalid points from being sampled in
  the first place.
An ablation study
  shows that error intervals, movability flags, and input search
  effectively handle even difficult inputs across a range of domains.

\medskip
\noindent
This paper contributes three extensions to interval arithmetic:
\begin{itemize}
\item \emph{Error intervals} to track domain errors and rich notions
  of input validity (\Cref{sec:impl});
\item \emph{Movability flags} to detect when recomputing in higher precision is futile (\Cref{sec:movable});
\item \emph{Input search} to uniformly sample valid inputs with high probability (\Cref{sec:search}).
\end{itemize}
\Cref{sec:eval,sec:analysis} demonstrate that these extensions
  successfully address the issues of invalid inputs,
  futile recomputations, and low measure.
\Cref{sec:discussion} discusses our experience
  deploying these extensions to users.

\section{Overview}

Imagine a simulation that evaluates $x^y / (x^y + 2)$.
How accurate is this expression?
This section describes
  how \name's extensions to interval arithmetic
  help answer the question.
For ease of exposition,
  this section uses two-digit decimal arithmetic as a target precision,
  with ordinary floating-point as a higher precision arithmetic;
  targetting single- or double-precision floating-point
  with arbitrary-precision libraries sees analogous issues.

\subsection{Interal Arithmetic}

Consider an input like $(x, y) = (3.0, 1.1)$.
At this input point, the expression $x^y / (x^y + 2)$
  can be directly computed with two-digit decimal arithmetic:
  ${3.0}^{1.1} = 3.34\dotsb$, which rounds to $3.3$;
  then $3.3 + 2 = 5.30\dotsb$, which rounds to $5.3$;
  and finally $3.3 / 5.3 = 0.622\dotsb$, which rounds to $0.62$.
How accurate is this result?
To find out, we must compare the computed result
  with the true value of this expression.

One might estimate that true value
  using higher-precision computation.
In single-precision floating-point,
  the input evaluates to $0.62605417$,
  which rounds to the two-digit decimal $0.63$.
Two-digit decimal evaluation therefore has error:
  it produces $0.62$ instead of the correct $0.63$.
Unfortunately, single-precision floating-point itself has rounding error.
After all, in double-precision arithmetic the computed value is
  not $0.626054\underline{17}$ but $0.626054\underline{25\dotsb}$,
  and even higher precision could produce some third value.
So this method of computing the true answer
  is suspect: how much precision is enough?

Interval arithmetic answers this question
  using \textit{intervals}, written $[a, b]$, which represent
  any real value between $a$ and $b$ (inclusive).
Ordinary mathematical operations are then extended to intervals,
  the two ends always rounded outward
  to ensure that the resulting interval always contains
  the mathematically exact value.
Returning to the running example,
  the interval evaluation of $x^y$
  with $(x, y) = (3.0, 1.1)$ is $[3.0, 3.0]^{[1.1, 1.1]} = [3.3, 3.4]$.
Next, $[3.3, 3.4] + [2.0, 2.0] = [5.3, 5.4]$.
Finally, $[3.3, 3.4] / [5.3, 5.4] = [0.61, 0.65]$,
  with those two endpoints computed via
  $3.3 / 5.4 = 0.61$ and $3.4 / 5.3 = 0.65$.%
\footnote{
  Division yields its smallest output
  with a small numerator and large denominator;
  more generally, evaluating a mathematical operation on intervals
  means solving an optimization problem,
  identifying the maximum and minimum values
  a function can take on over a range of inputs.}
Therefore, the true value is between $0.61$ and $0.65$.
A similar interval computation can be done with any precision;
  with single-precision floating-point,
  the resulting interval is
  $[0.626054\underline{17}, 0.626054\underline{41}]$.

Importantly, this interval is much narrower
  and both endpoints round to $0.63$ in two-digit decimals,
  meaning that the true value (which must be in the interval) does the same.
These \textit{one-value} interval therefore provide a way
  to evaluate the true value of the expression,
  to a target precision, using a higher but finite precision.
If even single-precision floating-point resulted in too wide an interval,
  double- or higher-precision arithmetic could be tried
  until a one-value interval is found.

Interval arithmetic thus provides a fast way
  to compute a ground-truth value for an expression on an input point.
This algorithm can be used for a range of purposes.
In sampling-based error estimation,
  a ground truth is computed for many randomly-sampled inputs
  to a floating-point expression,
  and the actual floating-point answer is compared to the ground truth
  to determine how accurate the expression is.
However, while this works well on most inputs,
  on challenging expressions
  it can throw errors, enter infinite loops, or fail to find valid inputs.

\subsection{Invalid Input Points}

Not all inputs to $x^y / (x^y + 2)$ are as conceptually simple
  as $(3, 1.1)$; consider $(x, y) = (-1.1, 7)$.
For this input, the denominator $x^y + 2$ evaluates to the interval
  $[0.0, 0.1]$ in two-digit decimals,
  meaning a division by zero error is possible.
But single-precision evaluation shows
  that the error is a mirage:
  $[-1.1, -1.1]^{[7,7]} + 2$ yields the interval
  $[0.051284\underline{1}, 0.051284\underline{3}]$
  for the denominator,
  proving that division by zero does not occur.
More broadly, in interval arithmetic,
  an interval may contain both valid and invalid inputs for an operator,
  in which case an error can be possible without being guaranteed.

\textit{Error intervals} formalize this reasoning.
Each interval is augmented with
  a boolean interval $[g, p]$,
  where the $g$ tracks whether an error is guaranteed
  and the $p$ tracks whether it is possible;
  the interval $[\top, \bot]$ is illegal,
  since a guaranteed error implies a possible error.
For our running example with $(x, y) = (-1.1, 7)$,
  the error interval is $[\bot, \top]$ in two-digit decimal arithmetic,
  indicating that error is not guaranteed but is possible,
  and $[\bot, \bot]$ in single-precision floating-point,
  guaranteeing that no error occurs.
Importantly,
  error intervals can handle borderline cases with recomputation.
For this point, two-digit decimal arithmetic is insufficient
  to determine whether the input point is valid,
  but single-precision is.
If even single-precision weren't, double or higher precision could be tried.

Boolean intervals are a type of three-valued logic which
  can also address expressions with explicit preconditions
  or with conditionals that branch on floating-point comparisons.
In each case a boolean interval
  indicates whether a condition can, or must, be true,
  and multiple boolean intervals
  can be combined with the standard boolean operators.
For example, for an expression $E(x)$ with precondition $\sin(x) > 0$,
  the valid inputs $x$ are those
  where $\sin([x, x]) > 0 \land \neg \F{err}(E([x, x])) = [\top, \top]$,
  where $\F{err}(E)$ refers to the error interval of $E$.
The key for interval arithmetic is that error and boolean intervals
  allow expressing rich notions of input validity
  that integrate with recomputation
  and provide sound, error-free guarantees.

\subsection{Futile Recomputation}

Recomputation guarantees soundness but not speed.
To the contrary: repeated, futile recomputations
  present a significant practical problem,
  most commonly due to overflow.

Consider our running example $x^y / (x^y+2)$,
  but with the extreme input $(x, y) = (10^{10}, 10^{10})$.
Now $x^y$ is huge---too large for double-precision
  and even for common arbitrary-precision libraries.
So in interval arithmetic, it evaluates to $[\Omega, \infty]$
  for some largest finite representable value $\Omega$.
Since $\Omega$ is already the largest representable value,
  $[\Omega, \infty] + 2$ results in $[\Omega, \infty]$ again,
  and since the division $[\Omega, \infty] / [\Omega, \infty]$
  can produce a value as small as $\Omega/\infty = 0$
  or as large as $\infty/\Omega = \infty$,
  the final interval is $[0, \infty]$.
This interval is too wide to be helpful,
  and since the problem is overflow, not rounding,
  recomputing at a higher precision will not help.
But we need to prevent recomputation
  from taking futile recourse to higher and higher precisions
  until an error like timeout, memory exhaustion, or worse occurs.

\emph{Movability flags} detect such futile recomputations.
Movability flags are a pair of boolean flags
  added to every interval,
  which track whether overflow influences each endpoint.
When $x^y$ overflows, for example,
  the infinite endpoint in $[\Omega, \infty]$ is marked ``immovable'',
  meaning that recomputing it at higher precision
  would yield the same result.
These movability flags are then
  propagated through interval computations:
  $[\Omega, \infty] + 2$ also has an immovable infinity,
  and in the final result $[0, \infty]$ both endpoints are immovable.
This guarantees that higher-precision evaluation
  would produce the same uselessly-wide interval
  and cuts off further recomputation.
Importantly, immovable endpoints distinguish
  harmful overflows that produce timeouts, like these,
  from benign overflows that still produce accurate results,
  like in $1/e^x$.

Movability flags are sound:
  if an expression evaluates to a wide immovable interval,
  recomputation cannot compute an error-free ground truth.
This allows distinguishing harmful overflows
  from difficult expressions that require higher precision to resolve.
Movability flags are not complete---%
  the problem is likely undecidable~\cite{api-for-real-numbers}---%
  but are accurate enough to dramatically cut the number of cases
  of futile recomputation.

\subsection{Low Measure}

Many applications of interval arithmetic,
  such as for error estimation of floating-point expressions,
  require sampling a large number of valid input points,
  usually via rejection sampling
  (repeatedly sampling points and discarding invalid ones).
Often a specific distribution, like uniform sampling, is required as well.
But for many mathematical expressions the valid, samplable points
  have \textit{low measure}: they are a tiny subset of the entire input space.
Consider $\sin^{-1}(x+2007)$:
  the only valid inputs are in $[-2008, -2006]$,
  a range that contains approximately $0.0001\%$
  of double-precision floating-point values.
Rejection sampling would yield almost exclusively invalid points!
Since restrictions on multiple variables combine multiplicatively,
  the chance a sample is valid decreases further for more variables.

\emph{Input search} addresses this issue
  by identifying and ignoring regions of the input space
  where all inputs are invalid or unsamplable.
Input search recursively subdivides the sample space
  into axis-aligned \textit{hyperrectangles},
  which assign each variable an interval.
For each hyperrectangle,
  input search then uses error intervals and movability flags
  to determine whether points in that interval are valid and samplable.
In a form of branch-and-bound search,
  input search then discards hyperrectangles that return $[\bot, \bot]$
  and subdivides those that return $[\bot, \top]$.
For example, for $\sin^{-1}(x + 2007)$,
  the interval $[0, \infty]$ is discarded
  but the interval $[-\infty, 0]$,
  whose error interval is $[\bot, \top]$,
  is subdivided and searched further.

Input search leverages error intervals and movability flags
  to ensure that difficult regions of the input space are handled soundly.
This ensures that input regions that contain valid inputs
  are never discarded,
  and that applications can choose to ignore (or include)
  regions where movability flags prove unsamplability.
Input search also offers control over the sampling distribution
  and tunes the subdivision to the target distribution
  to ensure fast convergence.

Put together, error intervals, movability flags, and input search
  overcome practical issues with interval arithmetic
  that plague state-of-the-art implementations
  and enable for more effective and robust error estimation.

\section{Background}

Arbitrary-precision interval arithmetic with recomputation
  is the standard method for approximating
  the exact result of a mathematical expression.

\subsection{Arbitrary-precision Floating Point}

Software libraries like MPFR~\cite{mpfr} 
  provide floating-point arithmetic
  at an arbitrary, user-configurable precision $p$.
This arithmetic represents a finite subset
  $S_p$ of the extended reals $\mathbb{R} \cup \{ \pm \infty \}$.
Any real number $x$ can be \textit{rounded} $R_p(x)$
  to precision $p$ by monotonically choosing
  one of the two values in $S_p$ closest to $x$;
  we write $R_p^\downarrow$ and $R_p^\uparrow$
  for rounding down and up specifically.
Precision $p$ implementations
  of functions like $\sin$ or $\log$
  are \textit{correctly rounded}:
  the implementation $f_p$ of a function $f$
  satisfies $f_p(x) = R_p(f(x))$.
But larger expressions can be inaccurate
  even in high precision:
  $(1 + x) - x$ evaluates to $0$ for all inputs $x > 2^p$.

\subsection{Interval Arithemtic}

Intervals are an abstract domain over the extended reals:
  the interval $[a, b] \in S_p \times S_p$
  represents the set $\{ x \in \mathbb{R}^* \mid a \le x \le b \}$.
Interval versions $f_p$ can be constructed
  for each mathematical function $f$
  to guarantee both soundness,
\[
\forall x_1 \in I_1, x_2 \in I_2, \ldots, \quad
f(x_1, x_2, \ldots) \in f_p(I_1, I_2, \ldots),
\]
and weak completeness,
\[
\exists
  \left. x_1, y_1\in I_1\right.,
  \left. x_2, y_2 \in I_2 \right., \dotsc,
  \quad
  f_p(I_1, \ldots, I_n) =
  [
    R_S^\downarrow(f(x_1, \ldots, x_n)),
    R_S^\uparrow(f(y_1, \ldots, y_n))
  ].
\]
The points $x_i$ and $y_i$ are called \textit{witness points},
For example, since $[a_1, a_2] + [b_1, b_2]$
  is at least $a_1 + b_1$ and at most $a_2 + b_2$,
  the result must be
  $[R_p^{\downarrow}(a_1 + b_1), R_p^{\uparrow}(a_2 + b_2)]$.

\subsection{Recomputation}

Consider an expression $e$ over the real numbers,
  consisting of constants, variables, and function applications.
In a \textit{target precision} $p$,
  the most accurate representation of its value at $x$
  is the \textit{ground-truth} value $y^* = R_p(\ll e \rr(x))$,
  where $\ll e \rr(x)$ is the error-free real-number result.
Interval arithmetic can be used to compute $y^*$.
Evaluate $e$ to some interval $[y_1, y_2]$ at precision $q > p$;
  then
\[
y_1 \le \ll e \rr(x) \le y_2
\implies
R_p(y_1) \le y^* \le R_p(y_2)
\]
by soundness and monotonicity.
If $[y_1, y_2]$ is sufficiently narrow,
  $R_p(y_1) = y^* = R_p(y_2)$,
  which computes $y^*$.
We call these narrow intervals \textit{one-value} intervals;
  computing one just requires finding a large enough precision $q$.

A useful strategy for finding $q$ is to start just a bit past $p$
  and then grow $q$ exponentially,
  which guarantees that large precisions are reached quickly.
Under certain assumptions, this exponential growth strategy
  can also be proved optimal in the sense of competitive analysis.
Sometimes, though, the $q$ required is too large to be feasible
  or may not exist due to limitations like a maximum exponent size.
Practical implementations must thus cap the maximum value of $q$
  and return indeterminate results when the cap is reached.

\section{Boolean and Error Intervals}
\label{sec:impl}

Interval arithmetic is a method for establishing bounds
  on the output of a mathematical expression.
This section introduces \textit{boolean intervals},
  which incorporate control flow into the interval arithmetic framework,
  and demonstrate how their application to domain errors
  (\textit{error intervals})
  allows integrating rich notions of input validity
  with the soundness and recomputation
  provided by interval arithmetic.

\subsection{Modeling Control Flow}

Besides real computations,
  floating-point expressions contain
  explicit and implicit control flow.
However, since intervals represent a range of inputs,
  the results of a boolean expression can be indeterminate
  (true on some inputs in the range and false on others).
Boolean intervals (for explicit control flow)
  and error intervals (for implicit control flow)
  represent this possibility.

\begin{definition}
\textit{Boolean interval}s $[\top, \top]$, $[\bot, \bot]$, and $[\bot, \top]$
  denote the sets $\{\top\}$, $\{\bot\}$, and $\{\bot, \top\}$ of booleans.
\end{definition}
\noindent
Boolean intervals form a standard three-valued logic,
  with $[\bot, \top]$ representing the indeterminate truth value
  and with standard semantics for conjunction, negation, and disjunction.
Comparison operators on intervals can return boolean intervals:
  $[1, 3] < [4, 5] = [\top, \top]$, while $[1, 4] < [3, 5] = [\bot, \top]$.
Explicit control flow like the $\F{if}$ operator
  unions its two branches when the condition is indeterminate.

Expressions also have implicit control flow due to domain errors.
As with comparisons, whether a domain error occurs
  can be indeterminate;
  each expression thus returns an \textit{error interval},%
  \footnote{Error intervals can be viewed as an abstraction
    not just of the real results of the computation,
    but of the \textsf{Error} monad the computation occurs within.}
  a boolean interval describing
  whether a domain error occurred
  during the evaluation of the expression;
  we write $\F{err}(e)$ for the error interval of expression $e$.%
  \footnote{In this way, \F{err} is analogous
    to a try/catch/else block.}
When an error is possible but not guaranteed,
  the function's output is still meaningful.
For example, the expression $\F{pow}(x, y)$
  raises a domain error when $x$ is negative and $y$ is non-integral.
Thus, $\F{pow}([-1, 2], [1, 5])$
  returns $[-1, 32]$ with error interval $[\bot, \top]$
  because $\F{pow}(-1, 1) = -1$ is the minimum possible output,
  $\F{pow}(2, 5) = 32$ is the maximum possible output,
  and $\F{pow}(-1, 2.5)$ demonstrates that domain error is possible.%
\footnote{When an expression's error interval is $[\top, \top]$, of course,
      the interval bounds are meaningless by necessity.}

At higher precisions input intervals are narrower,
  so errors that were once possible may be ruled out.
Error intervals make it possible to distinguish those cases
  (which have error interval $[\bot, \top]$)
  from cases like $\sqrt{-1}$
  (which have error interval $[\top, \top]$)
  where an error is guaranteed even at higher precision.
Consider $\sqrt{\cos(x)}$.
At low precisions and with large $x$ values,
  $\cos(x)$ can evaluate to $[-1, 1]$,
  so that $\sqrt{\cos(x)}$ is possibly invalid.
Without error intervals,
  $\sqrt{\cos(x)}$ would indicate the possible error
  by returning $[\F{NaN}, 1]$.
The error interval $[\bot, \top]$ gives more information:
  it indicates that the expression could be a valid at a higher precision.
But if, at a higher precision, $\cos(x)$ is found to be strictly negative,
  the error interval will be $[\top, \top]$ and cut off further recomputation.

\subsection{Point Validity}

Both explicit and implicit control flow
  can affect whether a point is a valid input.
Combinations of boolean and error intervals
  allow expressing these rich notions of input validity.
Consider how four validity requirements,
  drawn from the \herbie tool~\cite{herbie},
  can be formalized as expressions yielding boolean intervals.
Each validity requirement applies
  to the input $(x, y, \dotsc)$ of an expression $E$.

First, Herbie requires each input variable
  to have a finite value in the target precision.
Given the target precision's
  smallest and largest finite values $K_-$ and $K_+$,
  the comparison $K_- \le x \le K_+$
  expresses that requirement for the variable $x$.
The constants $K_-$ and $K_+$ can be computed
  by ordering all values in the target precision
  and taking those just after/before $\mp\infty$.
Note that the expression $-\infty < x < +\infty$
  is equivalent in the target precision,
  but is much weaker in higher precisions
  that contain values between $K_+$ and $+\infty$.

Second, Herbie requires the output value,
  when computed without rounding error, to be finite as well.
The expression $K_- \le E \le K_+$ enforces this requirement.
Note that boolean intervals allow incorporating
  arbitrary mathematical formulas into validity requirements.

Third, Herbie requires $E$ to not raise a domain error.
The expression $\neg \F{err}(E)$,
  which uses $\F{err}$ to access the error interval,
  enforces this requirement.
Since interval operations are meaningful
  even when an input interval contains invalid points,
  combining this requirement with the second one
  allows ignoring points that are \textit{either}
  too large or invalid without using higher precision
  to determine which restriction applies.

Fourth, Herbie users can provide a precondition
  that valid inputs have to satisfy.
That precondition can be interpreted
  as a boolean interval expression
  to enforce this requirement.
Since interval arithmetic already accounts for rounding error,
  the precondition is checked without rounding error.
If the chosen precision isn't high enough,
  the precondition returns the indeterminate boolean interval $[\bot, \top]$,
  and it is automatically recomputed at a higher precision
  to determine whether the input is in fact valid.

Since boolean intervals admit conjunction,
  all four of these requirements can be combined
  into a single formula that for input validity.
That single formula is crucial to input search \Cref{sec:search}.

\section{Movable and Immovable Intervals}
\label{sec:movable}
  
For some inputs,
  computing a ground-truth value
  using interval arithmetic and recomputation
  is impossible;
  one common problem is an overflow
  that persists across precisions.
When no ground-truth can be computed for certain input points,
  the user must be warned,
  and recomputation must be prevented from causing timeouts.
\textit{Movability flags} on each endpoint achieve this goal.
Movability flags are set when operations overflow,
  and track the influence of that overflow on further computation.
When overflow prevents the computation of a ground-truth value,
  movability flags warn the user without timing out.

\subsection{Movable and immovable intervals}

Formally, an interval is augmented with two movability flags,
  one associated with each of its endpoints;
  an endpoint is \textit{immovable} when its movability flag is set.
In examples, we write exclamation marks for immovable endpoints,
  so that $[0, 1 !]$ is an interval with a movable left endpoint $0$
  and an immovable right endpoint $1$.
The movability flags describe how intervals get narrower
  at higher precision.

\begin{definition}
  One interval refines another, written $[a', b'] \prec [a, b]$,
  when $b' = b$ and both are immovable
  or when $b' \le b$ and $b$ is movable;
  and likewise $a' = a$ and both are immovable or when $a' \ge a$
  and $a$ is movable.
\end{definition}
The goal of movability flags is to 
  determine whether recomputation at higher precision
  could possibly yield a narrower interval output.
Specifically, they guarantee:
\begin{theorem}
  \label{thm:movability}
  $\ll e \rr_q(x) \prec \ll e \rr_p(x)$
  for all $e$, $x$, and $q > p$.
\end{theorem}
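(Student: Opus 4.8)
The plan is to prove \Cref{thm:movability} by structural induction on $e$, reducing the inductive step to a refinement-monotonicity lemma for each primitive interval operation. In the base cases, a variable $x_i$ evaluates at precision $p$ to $[R_p^\downarrow(x_i), R_p^\uparrow(x_i)]$ with both endpoints movable; since $S_p \subseteq S_q$ whenever $q > p$, directed rounding at the higher precision only tightens both endpoints, so refinement holds via the movable clause of the definition. A numeric constant is the same, except that a constant lying beyond every representable range overflows to an endpoint $\pm\infty$ that is immovable at every precision, so the two computations agree and $\prec$ holds by reflexivity.

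For the inductive step $e = f(e_1,\dots,e_n)$, the induction hypothesis gives $I_i' := \ll e_i \rr_q(x) \prec \ll e_i \rr_p(x) =: I_i$ for each $i$, which in particular forces $I_i' \subseteq I_i$ as sets of reals. It then suffices to establish the lemma: if $I_i' \prec I_i$ for all $i$ and $q > p$, then $f_q(I_1',\dots,I_n') \prec f_p(I_1,\dots,I_n)$. I would argue about the upper endpoints and appeal to symmetry for the lower ones, splitting on whether the upper endpoint $b$ of $f_p(I_1,\dots,I_n)$ is movable. If it is, weak completeness at precision $q$ writes the upper endpoint of $f_q(I_1',\dots,I_n')$ as $b' = R_q^\uparrow(f(z_1,\dots,z_n))$ for witness points $z_i \in I_i' \subseteq I_i$; soundness at precision $p$ gives $f(z_1,\dots,z_n) \le b$; and since $b \in S_p \subseteq S_q$, monotonicity of $R_q^\uparrow$ yields $b' \le R_q^\uparrow(b) = b$, which is exactly what refinement requires when $b$ is movable. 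This case uses only soundness, weak completeness, and the nesting $S_p \subseteq S_q$, not the movability rules themselves.

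The substance of the proof is the case where $b$ is immovable, where the movability-propagation rules guarantee $b = +\infty$ and that this infinity arose in one of two ways. Either it is inherited from an input endpoint of some $I_i$ that is an immovable $+\infty$; then the induction hypothesis forces the matching endpoint of $I_i'$ to be an immovable $+\infty$ as well, and the same propagation rule makes $b'$ an immovable $+\infty$, so $b' = b$ and both are immovable. Or it comes from an operation on finite arguments that overflowed and was flagged immovable; for this flag to be sound, the per-operation rule (for $\F{pow}$, $\exp$, $\sinh$, large products, and so on) must fire only when the true value provably exceeds the supremum, over the precisions under consideration, of the largest finite representable value $\Omega$ --- a quantity that is well-defined and finite because the exponent range is bounded. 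Under that discipline the overflow, and hence the immovable $+\infty$, recurs at every higher precision $q$. Explicit control flow is handled the same way: $\F{if}$ takes a convex hull of its branches when the condition interval is indeterminate, and one needs the corresponding monotonicity statement for that hull, which holds provided a hull endpoint is marked immovable only when it is forced immovable in every branch the condition could select.

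The main obstacle is precisely this overflow sub-case: stating, operation by operation, the exact condition under which an overflowing-from-finite endpoint may legitimately be flagged immovable, and verifying that the condition is sound --- so that the overflow really persists at every higher precision, carefully excluding the narrow boundary band between $\Omega$ at the current precision and its supremum over precisions, where overflow is precision-dependent and immovability must \emph{not} be claimed --- while still being strong enough to fire in the practically common cases. By contrast, the structural induction, the movable case, and the inherited-infinity case are routine once the per-operation movability rules have been written down.
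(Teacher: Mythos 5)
Your overall strategy is the paper's: structural induction on $e$, with the inductive step reduced to a per-operator refinement-preservation property (this is exactly \Cref{prop:ifun}), and with overflow-threshold reasoning justifying immovable infinite endpoints (the paper computes the threshold from MPFR's bounded exponent range, which is your ``supremum of $\Omega$ over precisions''). The movable-endpoint case via soundness, weak completeness, and nesting of $S_p \subseteq S_q$ is also fine.

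However, there is a genuine gap in your immovable case: you assert that ``the movability-propagation rules guarantee $b = +\infty$,'' and then treat only inherited infinities and overflow-from-finite. In the system the theorem is about, immovable endpoints are very often \emph{finite}. The paper's main propagation mechanism (\Cref{lemma:generalpropagate}) marks an output endpoint immovable when its witness points are immovable (or pinned) input endpoints and the value $f(a'_1,\dots,a'_n)$ is computed \emph{exactly} at precision $p$ --- e.g.\ $\F{sqrt}_p([0, 4\,!]) = [0, 2\,!]$, the immovable $-1$ in $\cos([!\,3, 4\,!])$, and multiplication by an immovable $0$. Your induction as stated simply does not cover these endpoints; the needed argument is exactness (a value representable at precision $p$ is representable and computed identically at every $q > p$, so $R_q$ is the identity on it) plus persistence of the witness points under refinement. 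Relatedly, your base case is inverted: the paper marks program inputs immovable (they lie in the target precision, so $[!\,x, x\,!]$ is identical at every precision) and constants like $\pi$ movable, whereas you mark variables movable. Refinement still holds for the base case either way, but the inversion matters because immovable finite inputs are precisely what feed the exact-computation propagation rule you omitted; without handling that rule, the inductive step fails for a large class of expressions the implementation actually flags.
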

\noindent
When an expression evaluates to an interval
  that is not  one-value and which has two immovable endpoints,
  any higher-precision computation will do the same,
  and recomputation is futile.
Alternatively, an interval with a single immovable endpoint
  is guaranteed to produce that value (if any).

Program inputs and constants (named and numeric)
  form a particularly simple case of \Cref{thm:movability}.
Program inputs are drawn from the target precision and
  can be directly represented in higher precision;
  they are thus represented by intervals $[! x, x !]$
  with both endpoints marked immovable.
Constants like $\pi$, however,
  cannot be exactly represented
  and have both endpoints of their interval marked movable.

For expressions larger than just constants,
  \Cref{thm:movability} is proved by induction.
Operators in the tree must properly
  interpret movability flags on their inputs
  and set movability flags on their outputs.
The key is to ensure that each operator $f$
  preserves refinement for its inputs:
\begin{property}
  \label{prop:ifun}
  $f_q(I_1', \dotsc, I_n') \prec f_p(I_1, \dotsc, I_n)$
  when $q > p$ and all $I_i' \prec I_i$.
\end{property}
\noindent
Importantly, if $f_p(I, J)$ has an immovable endpoint,
  then $f_q(I', J')$ must share that immovable endpoint.
When all operators satisfy \Cref{prop:ifun},
  \Cref{thm:movability} is preserved by structural induction.
The rest of this section describes how to guarantee \Cref{prop:ifun}
  for various operations.

\subsection{Immovability from Overflow}
  
Movability flags are set
  when the limitations of arbitrary-precision computation
  make it impossible for higher precision
  to make the result of a computation more precise.

Consider the $\exp(x)$ function, which computes $e^x$.
The MPFR library represents arbitrary-precision values
  as $\pm (1 + s) 2^h$, where the exponent $h$
  has a fixed upper bound $H$.
It thus cannot represent any values larger than $2^{H+1}$,
  so inputs $x > \log(2^{H+1})$ to $\exp$
  overflow at any precision.%
\footnote{The value of $H$ for MPFR depends on the platform,
    so \name calculates its overflow threshold at runtime,
    using 80 bits of precision and consistent rounding up.}
This fact allows $\exp$ to set movability flags on its output.

Consider the computation $\exp_p([0, 10^{10} !]) = [1, \infty !]$.
The output's right endpoint is $\exp(10^{10})$,
  which rounds up to $+\infty$.
Since $10^{10}$ exceeds the overflow threshold,
   the rounding will occur at any precision $p$;
  and since $10^{10}$ is immovable in the input interval,
  it will be the right endpoint of any refinement of the input.
So it is valid to mark the right output endpoint immovable.
Alternatively, for inputs like $[10^{10}, 10^{11}]$
  whose left endpoint exceeds the overflow threshold,
  the output's right endpoint can be marked immovable
  even if both input endpoints are movable.
Thus outputs can have immovable endpoints
  even with entirely movable inputs.
Note, however, that in this case
  the left output endpoint cannot be marked immovable,
  since it will be the largest representable finite value
  and that can typically increase with higher precision.

This general logic of overflow extends
  to the $\F{exp2}$ function with threshold $H+1$,
  and to $\F{pow}$, which detects overflow
  via the identity $x^n = \exp(n \log x)$.

\subsection{Propagating Immovability}

Once one operator sets a movability flag,
  that movability flag must be propagated through later computations.
That requires keeping track of the exactness of intermediate computations.

An arbitrary-precision function $f_p$ can be thought of
  as $R_p(f(x_1, \dotsc, x_n))$,
  where $f$ is the exact, real-valued function.
But when the exact result $f(x_1, \dotsc, x_n)$
  is representable in precision $p$,
  the rounding function $R_p$ is the identity.
Since $f(x_1, \dotsc, x_n)$ is then also representable
  at all higher precisions $q$,
  we have
  $f_q(x_1, \dotsc, x_n) = f_p(x_1, \dotsc, x_n)$.
This is the key to guaranteeing \Cref{prop:ifun}.

In general, an output interval's endpoint will be immovable
  if it is the exact result of a computation on immovable inputs.
Monotonic functions are a good illustration of the logic.
Consider a single-argument, monotonic function $f$.
Its behavior on intervals is particularly simple:
\[
  f_p([a, b]) = [R_p^\downarrow(f(a)), R_p^\uparrow(f(b))].
\]
At a higher precision,
  the rounding behavior could change
  and then so would the output interval.
However, if $a$ is immovable,
  any refinement $[a', b'] \prec [a, b]$
  must have $a = a'$ and $b \le b'$.
If $f(a)$ is exactly computed,
  $R_p^\downarrow$ is the identity.
In this case:
\begin{multline*}
  f_q([a', b'])
  = f_q([a, b'])
  =  [R_q^\downarrow(f(a)), R_q^\uparrow(f(b'))] \\
  = [f(a), R_q^\uparrow(f(b'))]
  \prec [f(a), R_p^\uparrow(f(b))]
  = f_p([a, b])
\end{multline*}

In other words, for monotonic functions,
  exact computations on immovable endpoints result in immovable endpoints.
For example, $\F{sqrt}_p([0, 4 !]) = [0, 2 !]$ because
  the square root of $4$ is exactly $2$.
However, $\F{sqrt}_p([0, 2 !]) = [0, 1.414\ldots]$
  has a movable right endpoint
  because the square root of $2$ is not exact at any precision.
Arbitrary precision libraries such as MPFR
  report whether a floating-point operator is exact,
  making this rule easy to operationalize.
  
For general interval functions, the task is more complex
  because the endpoints of the output interval
  need not be computed from input endpoints.
General interval functions may be defined
  as computing $f$ on \emph{witness points} $a'$ and $b'$:
\[
f_p([a_1, b_1], \ldots, [a_n, b_n]) =
[R_p^\downarrow(f(a'_1, \ldots, a'_n)), R_p^\uparrow(f(b'_1, \ldots, b'_n))]
\]
Here, the witness points $a'$ and $b'$
  are computed as an intermediate step
  to minimize and maximize $f$ over the input intervals.
In such a scenario,
  output endpoints are immovable
  only if the witness points are guaranteed to be the same
  in all refinements of the input intervals,
  and when $f$'s output is representable:
\begin{lemma}
  \label{lemma:generalpropagate}
  Suppose the left output endpoint of $f_p$,
  computed via $f(a'_1, \ldots, a'_n)$,
  is representable in precision $p$.
  That endpoint may be marked immovable if, for all $i$, either
  1) $a'_i = a_i$ and $a_i$ is immovable;
  2) $a'_i = b_i$ and $b_i$ is immovable;
  or 3) $a_i$ and $b_i$ are both immovable and $a'_i$ is computed exactly.
  The analogous applies to $f_p$'s right endpoint.
\end{lemma}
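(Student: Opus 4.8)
The plan is to prove \Cref{lemma:generalpropagate} directly, in the same style as the monotonic-function argument that precedes it: fix a refinement of the input intervals, trace the witness points, and show the left output endpoint is unchanged. By the symmetry noted in the statement, I would prove the claim only for the left endpoint and remark that the right endpoint is identical with $\downarrow$/$\uparrow$, $a_i$/$b_i$ and min/max swapped.

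First I would set up notation: let $I_i = [a_i, b_i]$ with given movability flags, let $I_i' \prec I_i$ be an arbitrary refinement at a higher precision $q > p$, and let $a_i'$ (resp.\ $\tilde a_i$) be the witness points that $f_p$ (resp.\ $f_q$) uses to produce the left endpoint, so that $\ll e\rr$-style soundness gives the left endpoint of $f_p(I_1,\dots,I_n)$ as $R_p^\downarrow(f(a_1',\dots,a_n'))$ and of $f_q(I_1',\dots,I_n')$ as $R_q^\downarrow(f(\tilde a_1,\dots,\tilde a_n))$. The core claim to establish is that under hypotheses (1)--(3) we may take $\tilde a_i = a_i'$ for every $i$, i.e.\ the old witness points are still valid minimizers over the refined intervals. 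In cases (1) and (2), $a_i' \in \{a_i, b_i\}$ is an endpoint that by the definition of $\prec$ survives unchanged into $I_i'$ (an immovable endpoint is fixed under refinement), so $a_i' \in I_i'$; in case (3) both endpoints are immovable, so $I_i' = I_i$ and trivially $a_i' \in I_i'$. Hence $(a_1',\dots,a_n')$ is a feasible point for the minimization defining the $f_q$ left endpoint, so that endpoint is at most $R_q^\downarrow(f(a_1',\dots,a_n'))$; combined with weak completeness/soundness of $f_p$ in the other direction one gets that the $f_q$ left endpoint equals $R_q^\downarrow(f(a_1',\dots,a_n'))$ — or rather, I only need the $\le$ direction plus the soundness lower bound to pin it down, being slightly careful that different global minimizers could exist.

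Next I would invoke the representability hypothesis: since $f(a_1',\dots,a_n')$ is representable in precision $p$, it is representable in every $q > p$, so both $R_p^\downarrow$ and $R_q^\downarrow$ act as the identity on it. Therefore the $f_p$ left endpoint is exactly $v := f(a_1',\dots,a_n')$, and the $f_q$ left endpoint is $R_q^\downarrow$ applied to the minimum of $f$ over the refined box, which is sandwiched: it is $\le v$ (feasibility of $a'$) and $\ge v$ by soundness of $f_p$ on the \emph{smaller} box $I_i' \subseteq I_i$ — actually the cleanest route is: the $f_q$ left endpoint is a lower bound for $f$ on $I_1'\times\cdots\times I_n'$, which contains $a'$, so it is $\le v$; and it is $\ge$ the $f_p$ left endpoint $= v$ because refining cannot lower the true minimum (the min over a subset is $\ge$ the min over the superset) and $R_q^\downarrow$ only decreases. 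So it equals $v$, which is exactly $a_i$-style ``$b' = b$ and both immovable'' clause of the refinement definition, i.e.\ the left endpoints coincide and marking $f_p$'s left endpoint immovable is consistent with \Cref{thm:movability}.

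The main obstacle I anticipate is handling non-uniqueness of witness points cleanly. The definition of $f_q$ only promises \emph{some} witness points achieving the endpoint; it does not promise it reuses $a'$, so the equality ``$f_q$ left endpoint $= R_q^\downarrow(f(\tilde a))$'' with $\tilde a \ne a'$ could a priori differ from $v$. The fix is to argue purely at the level of the optimal value rather than the chosen witnesses: the left endpoint of any sound-and-weakly-complete $f_q$ on a box $B$ equals $R_q^\downarrow(\inf_{x\in B} f(x))$ (this is what the witness-point formula amounts to under weak completeness), and since $I' \subseteq I$ we have $\inf_{x\in I'} f \ge \inf_{x\in I} f = v$ while also $\inf_{x\in I'} f \le f(a') = v$ because $a'\in I'$; hence $\inf_{x\in I'} f = v$, it is representable, and $R_q^\downarrow$ fixes it. A secondary subtlety is that \Cref{prop:ifun} also demands the right endpoint of $f_q$ refine that of $f_p$ regardless of movability; that part is immediate from the inclusion $I' \subseteq I$ plus monotonicity of $\inf/\sup$ and of rounding, and I would note it in one sentence so that \Cref{lemma:generalpropagate} together with this observation discharges \Cref{prop:ifun} for general witness-point functions.
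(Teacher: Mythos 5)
Your proposal is correct and follows essentially the same route as the paper's proof: in cases (1) and (2) the witness point is an immovable endpoint and so persists in any refinement, in case (3) the interval cannot refine at all, and representability makes the rounding an identity so the higher-precision endpoint coincides with the lower-precision one. Your extra step of arguing at the level of the optimal value $\inf_{x \in I'} f$ (rather than trusting that $f_q$ reuses the same witness points) is a mild tightening of the paper's witness-point argument, not a different approach.
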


\begin{proof}
  Since $a'_i$ minimizes $f$ over the intervals $I_0 \ldots I_n$,
    $f_p(a'_1, \ldots, a'_n)$ bounds $f$ from below.
  If, furthermore, $f_p(a'_1, \ldots, a'_n)$ is exactly computed,
    then $f_p(a'_1, \ldots, a'_n) = f_q(a'_1, \ldots, a'_n)$
    for any higher precision $q$.

  Consider conditions (1) and (2) first.
  If (1) or (2) holds, $a'_i$ is guaranteed to be a member
    of any refinement $I_i \prec [a_i, b_i]$
    by immovability.
  Since it is the witness point for $f$ at the lower precision,
    it is a valid witness point at any higher precision too.

  Alternatively, consider condition (3).
  If (3) holds,
    the only refinement of $[a_i, b_i]$ is itself,
    so $a_i'$ is always a member of that refinement.
  Since $a_i'$ is additionally computed exactly,
    it is computed identically and thus still a witness point
    at any higher precision.

  Since, under (1), (2), or (3),
    $a'_i$ is a valid witness point at any higher precision,
    the left output endpoint computed from it
    can be marked immovable.
\end{proof}

In practice, computing witness points is usually easy.
Consider the absolute value function $\F{fabs}([a, b])$:
  the left witness point is $a$, $b$, or $0$,
  and the right witness point is $a$ or $b$.
In some cases, it's best to avoid computing witness points directly.
For example, $\cos$ has a minimum of $-1$ at $\pi$.
Instead of computing $\pi$ inexactly, $-1$ can be used directly,
  so that for example $\cos([! 3, 4 !]) = [! {-1}, 0.653\dotsb]$.
In terms of \Cref{lemma:generalpropagate},
  the witness point $\pi$ is computed at infinite precision
  and then $\cos(\pi)$ is evaluated exactly.
Overflow frequently produces immovable infinite endpoints,
  but operations on infinities are generally exact,
  so \Cref{lemma:generalpropagate} retains immovability
  in these cases too.

\subsection{Function-specific Movability Reasoning}
\label{sec:special-cases}

Function-specific reasoning sometimes provides
  additional cases in which movability flags may be set.
The most common case is multi-argument functions
  where individual arguments are special values like zero or infinity.
These special values allow setting movability flags even when
  some of the input intervals are entirely movable.

Addition is an illustrative example.
Addition of anything with infinity yields infinity;
  thus, $[1, +\infty !] + [1, 2] = [2, +\infty !]$,
  with the right output endpoint immovable.
Multiplication by an immovable $0$, like addition of infinity,
  results in an immovable $0$, even if the other argument is movable.
Multiplication by infinity is similar but more complex:
  consider $[-1, 1] \times [! 1, +\infty !] = [-\infty, +\infty]$,
  where the output interval is movable
  because the movable left-hand argument may refine
  to a strictly positive or a strictly negative interval.
Thus,
  much like overflow detection requires knowing that the input interval
  is greater than a given threshold,
  handling infinite values in multiplication
  requires knowing the sign of the input interval:

\begin{lemma}
  Let $c = a'_i \times b'_i$ be an endpoint of an interval returned by multiplication.
  The endpoint may be marked immovable if:
  1) both $a'_i$ and $b'_i$ are immovable and $c$ is computed exactly; or,
  2) $a'_i$ is zero and immovable (or likewise for $b'_i$); or,
  3) $a'_i$ is infinite and immovable and $[b_1, b_2]$ does not
  contain zero (or likewise for $b'_i$).
\end{lemma}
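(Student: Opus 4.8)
The plan is to verify each of the three conditions separately, showing in each case that the claimed endpoint $c$ is reproduced exactly by any higher-precision recomputation --- which is exactly what \Cref{prop:ifun} demands of an endpoint marked immovable. Fix the output endpoint under consideration, let $I_1$ and $I_2$ be the two input intervals with $a'_i \in I_1$ and $b'_i \in I_2$ the witness points for that endpoint, so $c = a'_i \times b'_i$, and (matching the lemma's notation) write $I_2 = [b_1, b_2]$. Let $I_1' \prec I_1$ and $I_2' \prec I_2$ be the refined input intervals, which is the form a higher-precision recomputation takes by \Cref{thm:movability}. The argument factors in every case into two parts: (a) $c$ is still \emph{attained} as a product of a point of $I_1'$ with a point of $I_2'$; and (b) $c$ still bounds every such product from the correct side (below if we fixed the left endpoint, above for the right). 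Part~(b) is immediate, because the products over $I_1' \times I_2'$ form a subset of those over $I_1 \times I_2$, and $c$ was the extremum of the latter set; and then, since $c$ is exactly representable at the higher precision --- by hypothesis in case~(1), and because $c \in \{0, +\infty, -\infty\}$ in cases~(2) and~(3) --- outward rounding leaves it unchanged, so $c$ is exactly the recomputed endpoint. (The same hypotheses hold at the higher precision, so the endpoint is re-marked immovable there and the induction behind \Cref{thm:movability} goes through.)

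For condition~(1), the witness points of multiplication always lie at the corners of the input box, so each of $a'_i$ and $b'_i$ equals an immovable endpoint of its interval and hence, by the definition of $\prec$, still lies in the corresponding refined interval; that is part~(a). This case is just \Cref{lemma:generalpropagate}, conditions~(1)--(2), applied to each argument, with exactness of $c$ ensuring the recomputed product is numerically identical.

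For condition~(2), suppose $a'_i$ is an immovable $0$ (the case of $b'_i$ is symmetric), so $c = 0$. Immovability forces $0 \in I_1'$, so $0 = 0 \times t$ is a product over $I_1' \times I_2'$ for any $t \in I_2'$, giving part~(a). The one awkward subcase --- the partner witness being infinite, so that $0 \times \infty$ is an indeterminate form --- cannot arise, precisely because $a'_i = 0$ was a genuine witness of the \emph{finite} extremum $0$ at the lower precision, which forces its partner to be a finite point. For condition~(3), suppose $a'_i$ is an immovable $\pm\infty$ with $0 \notin I_2$. Then $0 \notin I_2'$ as well, so $I_2'$ is wholly positive or wholly negative; hence the sign of $\pm\infty \times t$ over $t \in I_2'$ is constant and equal to its value at the lower precision, so $c = \pm\infty$ is recomputed identically (arithmetic on infinities being exact), and $\pm\infty \in I_1'$ provides part~(a). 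If instead $I_2$ straddled $0$, a refinement could fall strictly on either side of $0$, flipping the sign of the infinite product --- the behavior behind $[-1, 1] \times [! 1, +\infty !] = [-\infty, +\infty]$ --- which is exactly why the hypothesis $0 \notin I_2$ is needed.

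I expect the main obstacle to be not the arithmetic but making part~(b) fully rigorous once infinities appear: one must confirm that the range of products over the input box can only shrink under refinement, and that the indeterminate form $0 \cdot \infty$ is never the actual value of a genuine witness point. The first fact is what licenses marking the endpoint immovable at all; the second is the reason condition~(2) tacitly excludes an infinite partner and condition~(3) explicitly requires the partner interval to avoid zero.
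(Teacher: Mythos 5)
Your proof is correct and follows essentially the same route as the paper's: case (1) by appeal to \Cref{lemma:generalpropagate}, case (2) by the fact that an immovable zero remains in every refinement and annihilates any partner, and case (3) by observing that $0 \notin [b_1, b_2]$ fixes the sign of the partner in every refinement so the immovable infinity yields the same signed infinity. Your extra scaffolding (attainment plus one-sided bounding under refinement) and your aside about the $0 \cdot \infty$ form are just more explicit versions of what the paper leaves implicit (the paper simply takes $0 \times b'_i = 0$ for any $b'_i$), so there is no substantive difference in approach.
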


\begin{proof}
  The first case just restates \Cref{lemma:generalpropagate}.
  In the second case, $a'_i$ is immovable, so $a'_i = 0$ is in any refinement.
  Since $0 \times b'_i = 0$ for any $b'_i$, the result is immovable.
  Finally, in the third case, while $b'_i$ may change when refined,
    its sign cannot, since $[b_1, b_2]$ does not contain zero.
  Since $a'_i$ is immovable, $a'_i \times b'_i$ takes on a fixed sign in any refinement.
  Since $a'_i$ is additionally infinity, the resulting $+\infty$ or $-\infty$ is immovable.
\end{proof}

\noindent
Immovable infinite endpoints are often caused by overflow,
  so this multiplication-specific reasoning is essential
  to detecting immovability for many expressions.

\section{Input Search}
\label{sec:search}

Most applications of interval arithmetic,
  whether using pure sampling or input generation or exhaustive testing,
  start by selecting one or many valid input points.
But for many expressions, valid inputs are rare,
  making finding one challenging.
And some applications (such as sampling-based error estimation)
  further require uniform sampling of valid inputs.
Input search addresses this issue
  by discarding invalid portions of the input space
  and focusing on valid points.

\subsection{Intervals for Uniform Sampling}

\begin{figure}
  \begin{minipage}[t]{0.53\linewidth}
  \[
  \begin{array}{l}
    T, F, O = \emptyset, \emptyset, \{ [-\infty, \infty]^n \} \\
    \K{for}\:i \in [1, \ldots, N]: \\
    \begin{block}
     O' = \emptyset \\
     \K{for}\:x \in O:\\
     \begin{block}
     y = P(x) \\
     \K{if}\:\F{stuck}(y): \; \K{warn}\:\text{``Discarding points in $x$''} \\
     \K{elif}\:y = [\top, \top]: \; T = T \cup \{ x \} \\
     \K{elif}\:y = [\bot, \bot]: \; F = F \cup \{ x \} \\
     \K{else}: \; O' = O' \cup (\F{split}\:x\:\F{along}\:(i \bmod n))
     \end{block} \\
    O = O'
    \end{block} \\
    \K{return}\:T \cup O
  \end{array}
  \]
  \end{minipage}%
  \hfill%
  \begin{minipage}[t]{0.45\linewidth}
  \caption{Pseudo-code for input search.
    Given a precondition $P$ over $n$ real variables,
      input search finds a set $T \cup O$ of disjoint hyperrectangles,
      such that $T \cup O$ contains all valid, samplable input points.
    The $T$, $F$, and $O$ sets are initialized on the first line
      and then updated over the course of $N$ search iterations.
    In each iteration, the hyperrectangles in $O$ are evaluated.
    Stuck intervals are discarded;
      intervals where the precondition has undetermined truth value
      are split into two along a dimension picked by round robin.
  }
  \label{code:search}
  \end{minipage}
\end{figure}
  
Boolean and error intervals can evaluate input validity
  for interval inputs, as shown in \Cref{sec:impl}.
They can thus prove a whole range of inputs
  to be valid or invalid at once.
Consider a validity condition $P$
  expressed via boolean and error intervals,
  and input intervals $I_1, I_2, \ldots, I_n$.
Then $P(I_1, I_2, \ldots, I_n)$
  can yield $[\top, \top]$, $[\bot, \bot]$, or $[\bot, \top]$,
  depending on whether these intervals contain
  only valid points, only invalid points, or a mix.

More formally,
  with $n$ free variables and a target precision $p$;
  the total number of input points is then $|S_p|^n$.
The \textit{hyperrectangle} $I_1 \times \dotsb \times I_n$,
  where each free variable is bound by an interval,
  has weight
\(
w = 
| S_p \cap I_1 | / |S_p| \cdot 
  \dotsb \cdot
  | S_p \cap I_n | / |S_p|
\).
Now consider a set of hyperrectangles
  that partition the total input space.
Uniformly sampling from the $[\top, \top]$ hyperrectangles
  and rejection sampling from the $[\bot, \top]$ hyperrectangles,
  both with probability proportional to $w$,
  yields valid inputs points,
  uniformly sampled from the input space.
If uniform sampling is unnecessary,
  the weights can simply be ignored;
  or if some non-uniform distribution is required
  the weight $w$ can use the cumulative distribution function
  in place of the uniform size $| S_p \cap I_k |$.

This basic idea forms the basis of a branch-and-bound algorithm
  shown in \Cref{code:search} that iteratively decomposes the
  complete input space into a subset with a higher proportion
  of valid points.
The search algorithm maintains
  three sets of hyperrectangles,
  $F$, $T$, and $O$,
  initialized with $T = F = \{\}$
  and with $O = \{ S_p^n \}$ containing the whole input space.
At each step, the validity expression
  is evaluated on each hyperrectangle in $O$.
If the result is $[\bot, \bot]$ or $[\top, \top]$,
  the interval is moved to $F$ or $T$,
  but in the $[\bot, \top]$ case
  the interval is subdivided and both halves are placed back into $O$.
At each step, the intervals in $F \cup T \cup O$ cover the input space;
  the process terminates once no intervals are left in $O$,
  or after a fixed number of iterations (\SearchIterations in our implementation).
Thus, as the search algorithm proceeds,
  $T$ and $F$ accumulate hyperrectangles
  covering valid and invalid regions of the input space,
  while $O$ contains increasingly small hyperrectangles
  whose validity is unknown.
Once search is finished,
  points are sampled proportionally to weight
  from the hyperrectangles in $T \cup O$---%
  from $T$ directly and from $O$ via rejection sampling.

Two special twists are required
  to make this algorithm effective and maximally general.
First, when splitting $[\bot, \top]$ hyperrectangles,
  the algorithm performs best when
  the two resulting hyperrectangles contain
  the same number of values.
Since floating-point binary representations are sorted,
  the best split is midway between
  the \textit{binary representations} of the endpoints
  in the target precision.
Second, it is important that the hyperrectangles in $T \cup F \cup O$
  strictly partition the input space---%
  no point is in two hyperrectangles.
However, recall that intervals are inclusive on both ends,
  so one point can end up in two hyperrectangles
  and end up oversampled.
The fix for this is to increment one endpoint to the next
  floating-point value when splitting a hyperrectangle.
The split value is represented in the
  resulting lower hyperrectangle, and the next value is represented in the
  resulting higher hyperrectangle.
This allows input search to guarantees uniform sampling
  when that is required, such as for sampling-based error estimation.

\subsection {Range Analysis}

Hyperrectangle subdivision aids sampling
  because partitioning the input space can model
  complex relations between variables.
However, it is inefficient for the most common type of condition:
  constant bounds on variables.
Input search uses a static analysis of the precondition,
  called range analysis, to address this issue.
The main role of range analysis
  is to detect comparisons between variables and constants,
  and propagate these comparisons through boolean operations.
For each variable, the result is a set of intervals,
  where inputs outside the set are provably invalid.
Input search then forms the cartesian product
  of these per-variable interval sets
  and uses the resulting hyperrectangles
  as a starting point for the subdivision search.

Formally, range analysis returns disjoint intervals $X_{i1} \cup X_{i2} \cup \ldots \cup X_{in}$
 for each variable $x_i$, where:
\[
\ll P \rr(x) \implies \forall i, x_i \in X_{i1} \cup X_{i2} \cup \ldots \cup X_{in}.
\]
Range analysis traverses the precondition from the bottom up.
A comparison $x < C$ becomes the range table $x \mapsto \{ [-\infty, C] \}$,
  and boolean operations correspond
  to straightforward intersections and unions of range tables.
For non-trivial comparisons like $\exp(x) < y$,
  range analysis just returns a non-restrictive range table;
  these more complex cases
  are better handled by branch-and-bound search.

\subsection{Unsamplable Inputs}

As discussed in \Cref{sec:movable},
  some valid input points are \textit{unsamplable}:
  interval arithmetic cannot compute a ground truth value
  and so cannot accurately measure error.
Input search must warn the user if it finds unsamplable points,
  ideally providing an example to help the user debug the issue.
But unsamplable points often represent unintended inputs;
  users often react to the warning by adding a precondition
  ruling the unsamplable points invalid.
So many applications of interval arithmetic
  in fact discard unsamplable points, after issuing a warning.
Since unsamplable inputs often come in large contiguous regions,
  input search ought to detect them, warn the user, and ignore them.

Movability flags let input search do this.
Define an interval to be \F{stuck}
  when it has two immovable endpoints but is not one-value.%
\footnote{If only finite values are valid,
  intervals with a single immovable, infinite endpoint
  can also be considered \F{stuck}.}
In other words, $x$ is unsamplable if $\F{stuck}(\ll e \rr_p(x))$.
Surprisingly, \F{stuck} satisfies \Cref{prop:ifun}:
\begin{theorem}
  \label{thm:hyperrect-movability}
  Suppose $\F{stuck}(\ll e \rr_p(X))$ for a hyperrectangle $X$.
  Then for any hyperrectangle $X' \prec X$ (interpreted pointwise)
    and precision $q > p$,
    $\F{stuck}(\ll e \rr_q(X'))$ also holds.
\end{theorem}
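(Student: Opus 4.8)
The plan is to reduce \Cref{thm:hyperrect-movability} to a hyperrectangle-valued strengthening of \Cref{thm:movability}, after which the statement falls out of the definitions of $\prec$ and $\F{stuck}$. So the first step is to establish that whenever $X' \prec X$ holds pointwise (each coordinate interval satisfying $I_i' \prec I_i$) and $q > p$, then $\ll e \rr_q(X') \prec \ll e \rr_p(X)$. This is the same structural induction on $e$ already sketched after \Cref{prop:ifun}, the only difference being that variable leaves now evaluate to genuine intervals rather than point intervals. In the base cases, a variable $x_i$ evaluates to $I_i'$ (resp.\ $I_i$) so refinement is exactly the hypothesis; a program input or an exactly representable numeric constant evaluates to the same two-sided-immovable interval at both precisions; and an inexact constant such as $\pi$ evaluates to $[R_q^\downarrow(\pi), R_q^\uparrow(\pi)]$ (resp.\ at $p$) with both endpoints movable, where $S_p \subseteq S_q$ and monotonicity of directed rounding give $R_q^\downarrow(\pi) \ge R_p^\downarrow(\pi)$ and $R_q^\uparrow(\pi) \le R_p^\uparrow(\pi)$, which is refinement for a movable-ended interval. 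The inductive step is immediate: if $e = f(e_1, \ldots, e_n)$ and the induction hypothesis gives $\ll e_j \rr_q(X') \prec \ll e_j \rr_p(X)$ for each $j$, then \Cref{prop:ifun} yields $\ll e \rr_q(X') \prec \ll e \rr_p(X)$.

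The second step feeds the stuck hypothesis into this lemma. Write $\ll e \rr_p(X) = [a, b]$; being $\F{stuck}$ means both $a$ and $b$ are immovable and $[a, b]$ is not one-value. Let $\ll e \rr_q(X') = [a', b']$, which refines $[a, b]$ by the lemma. For the right endpoint, the definition of $\prec$ offers two disjuncts, but ``$b' \le b$ with $b$ movable'' is impossible since $b$ is immovable, so $b' = b$ with both endpoints immovable; symmetrically $a' = a$ with both immovable. Hence $[a', b'] = [a, b]$ as intervals, flags included. Since being one-value is a property of an interval relative to the fixed target precision and does not depend on the working precision used to obtain it, $[a', b']$ is also not one-value. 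Two immovable endpoints together with not being one-value is precisely the definition of $\F{stuck}$, so $\F{stuck}(\ll e \rr_q(X'))$ holds.

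The main obstacle is the first step: carefully re-running the movability induction with interval-valued rather than point-valued inputs, verifying that \Cref{prop:ifun} applies coordinatewise to the possibly wide intervals $I_i' \prec I_i$ and that the leaf cases (variables and inexact constants) still produce refinements. Once that lemma is available the remainder is essentially a one-line case split on the definition of $\prec$; the only mild subtlety is noticing that any refinement of a two-sided-immovable interval equals that very interval, so ``not one-value'' is inherited with nothing left to prove.
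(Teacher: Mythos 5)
Your proposal is correct and follows essentially the same route as the paper: the paper's proof likewise combines ``any refinement of a \F{stuck} interval is \F{stuck}'' with the refinement property $\ll e \rr_q(X') \prec \ll e \rr_p(X)$, which it obtains by invoking \Cref{thm:movability} directly on hyperrectangle arguments. Your first step merely makes explicit (via the structural induction on \Cref{prop:ifun}) the hyperrectangle-valued strengthening of \Cref{thm:movability} that the paper's proof uses implicitly, which is a fair bit of added care but not a different argument.
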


\noindent
As a corollary,
  if all intervals in a hyperrectangle $X$ have both endpoints movable,
  and $\ll e \rr_p(x)$ is \F{stuck},
  then all points $x \in X$ are unsamplable,
  because their point intervals $[x, x]$ refine $X$.

\begin{proof}
  Any refinement of a \F{stuck} interval is also \F{stuck}.
  Now, by \Cref{thm:movability},
    if $X' \prec X$,
    $\ll e \rr_q(X') \prec \ll e \rr_p(X)$.
  So if $\ll e \rr_p(X)$ is \F{stuck}, so is $\ll e \rr_q(X')$.
\end{proof}

Input search thus marks each hyperrectangle's endpoints as movable
  before evaluating the validity condition.
When a hyperrectangle evaluates to a \F{stuck} interval,
  a warning is can be raised, informing the user
  to the presence of unsamplable inputs.
To aid in debugging,
  a point is chosen from $X$
  and provided to the user
  as an example unsamplable input;
  by \Cref{thm:hyperrect-movability},
  any point in $X$ will do.
Applications can configure input search
  to then discard $X$ by moving it to the $F$ set.

\section{Evaluation}
\label{sec:eval}

We implement error intervals, movability flags, and input search
  in the \name interval arithmetic library%
\footnote{\name is written in roughly 1650 lines of Racket,
  including 650 lines of interval operators,
  650 lines of search and recomputation algorithms,
  100 lines of module headers,
  and 250 lines of tests,
  and is publicly available as free software,
  at a URL removed for anonymization.}
  and evaluate it against the state-of-the-art Mathematica 12.1.1 \mathN function.
Mathematica is a proprietary software system advertised as 
  ``the world's definitive system for modern technical computing''
  and priced at roughly \$1\thinspace500 per person per year for industrial use;
  our research is supported by an educational site license.
Mathematica is also widely used across all science and engineering domains;
  a Google Scholar search for 'Mathematica' yielded 10,000 hits,
  while an ArXiv search found 200 results for 2021 alone.
Being a proprietary system, it's impossible to know with certainty
  how \mathN works;
  however, its maximum precision flag,
  its configurability for target precision or target accuracy,
  and its overflow and underflow warnings
  all suggest that \mathN implements interval arithmetic.

Our results are plotted in \Cref{fig:mathematica}.
We find that among \TotalHardPoints challenging input points,
  \name is able to resolve \percentrivalhardbetter more
  and has \timesworsemathematicaoverallunsamplable fewer
  indeterminate results.
Its error-handling-first design (with error intervals and movability flags)
  also avoids infinite loops, memory exhaustion, and hard crashes,
  which afflict Mathematica in \overallmathematicacrashtimeoutormemory cases.
Furthermore, input search can avoid \percentpointssearchsaves of the invalid inputs
  in the first place.

\begin{figure*}
  \includegraphics[width=1.0\linewidth]{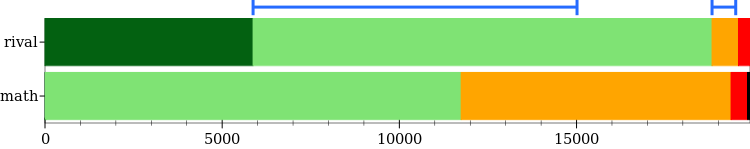}%
  \caption{
    \name versus Mathematica's \mathN
      on approximately \TotalHardPoints hard inputs
      from the \TotalHerbieBenchmarks \herbie~1.4 benchmark suite.
    \name achieves better results for each outcome threshold,
      including successfully sampled points (dark green),
      points proven invalid (light green),
      unsamplable points (orange),
      or points with unknown results (red).
    Additionally, some points cause Mathematica
      to run out of memory or crash, shown in black.
    The blue intervals mark invalid and unsamplable points
      avoided by input search.
  }
  \label{fig:mathematica}
\end{figure*}

\subsection{Methodology}

Mathematica's \mathN function can compute mathematical expressions exactly
  to an arbitrary precision, precisely the task supported by \name.
We chose Mathematica as our baseline
  after comparing multiple traditional interval arithmetic libraries.
Of all the evaluated libraries,
  Mathematica was the most robust and supported the most functions.
For example, the MPFI library
  does not handle errors soundly (unlike Rival's error intervals),
  has no built-in support for recomputation (unlike Rival's movability flags),
  and lacks support for some mathematical functions like \F{pow} and \F{erf}.
For an interval arithmetic library,
  soundness and weak completeness already require optimal interval endpoints,
  so traditional interval arithmetic libraries
  generally have the same results as \name \textit{without} its extensions.
Meanwhile, Mathematica is a widely-used commercial product
  with extensive support for all mathematical functions
  and presumably-robust implementations of all of them.
Furthermore, Mathematica's documentation suggests
  that \mathN is intended to handle invalid inputs soundly
  and cut off recomputation in some cases of over- and under-flow,
  making it at least comparable to \name.

We model our evaluation task on sampling-based error estimation;
  that is, on the task of randomly sampling inputs to an expression
  and determine the ground-truth value of the expression at those inputs.
We use a collection of \TotalHerbieBenchmarks floating-point expressions
  drawn from sources like numerical methods textbooks,
  mathematics and physics papers, and surveys of open-source code,
  collected in the \herbie~1.4 benchmark suite.
These benchmarks consist of mathematical expressions with
  \MinHerbieBenchmarkOperations--\MaxHerbieBenchmarkOperations
  operations
  and \MinHerbieBenchmarkVariables--\MaxHerbieBenchmarkVariables
  variables;
  \TotalWithConditionals benchmarks contain conditionals
  and \TotalWithPrecondition have user-defined preconditions.
Each benchmark is evaluated to double precision
  at 8\thinspace256 randomly sampled input points
  (for \mathN, to the equivalent decimal precision).
Both \mathN and \name are limited to 10240 bits;
  \name is also limited to 31-bit exponents.%
\footnote{Mathematica's precision limit is set
  via the \F{\$MaxExtraPrecision} variable%
  ~\cite{mathematica-maxextraprecision}.
  Its exponent limit is not configurable,
  but experiments suggest an exponent limit of approximately 37 bits.}
In error estimation, most inputs to an expressions
  are “easy” points where any technique will succeed;
  the crux of the problem are the challenging or marginal points.
To focus on these challenging points,
  we ignore inputs that both Mathematica and \name can sample,
  and focus on the remaining more challenging points,
  which are \TotalHardPoints out of the overall \overallallpoints.
Evaluations are performed on a machine
  with an i7-4790K CPU (at 4.00GHz) and 32GB of DDR3 memory
  running Debian 10.0 (Buster), Racket 7.9 BC, and MPFR version 4.0.2-1.

The main methodological challenge
  is matching Mathematica's and \name's semantics.
Mathematica's \mathN function supports
  floating-point, fixed-precision, and exact computation;
  we convert the sampled input points to exact rational values
  to ensure that exact computation is performed.
We also wrap each intermediate Mathematica operator
  to signal an error for \texttt{Indeterminate} or \texttt{Complex} outputs.
We compare Mathematica's and \name's outputs,
  when both tools are able to compute a value,
  to check that subtleties like \F{atan2} argument order
  are correctly aligned between the two systems.
We also capture warnings and errors,
  which Mathematica uses to indicate whether its evaluation is sound.
These cross-checks give us confidence
  that Mathematica and \name are asked
  to evaluate the same expressions on the same inputs.
(Two points fail these cross-checks at the time of this writing.
  We have investigated these inputs,
  discovered them to trigger a bug in Mathematica,
  reported the bug to Wolfram support, and had it confirmed.)
\name's input search has no direct analog in Mathematica,
  so for this experiment we randomly selected points
  without using input search.
However, we did test
  whether input search would have allowed \name
  to avoid sampling that point.

Evaluating \name against Mathematica requires understanding
  Mathematica's behavior on invalid and challenging inputs.
Unfortunately, unlike \name,
  Mathematica is proprietary, and the documentation is not always specific.
Like \name, Mathematica's \mathN attempts to detect
  invalid inputs and futile recomputations.
In general,
  for invalid inputs Mathematica either returns a complex number
  or the special \texttt{Indeterminate} value
  for some intermediate operator.
In \Cref{fig:mathematica},
  these cases are considered proven domain error (light green).
Mathematica also raises a variety of warnings,
  including for overflow and underflow;
  we conservatively assume these warnings are sound
  and are reached at low precision,
  and mark these cases unsamplable (orange in \Cref{fig:mathematica}).
For \name this status is used only when the interval is proven \F{stuck}
  before the maximum precision is reached.
(More broadly, we interpret ambiguous cases as generously as possible
  toward Mathematica to ensure a fair comparison.)
When \name or Mathematica reach their internal precision limit,
  we use the unknown result status (red in \Cref{fig:mathematica}).

\subsection{Results}

Of the \TotalHardPoints hard input points
  across our \TotalHerbieBenchmarks benchmarks,
  Mathematica is able to resolve only \mathematicahardsamplesorerror,
  while \name is able to resolve \rivalhardsamplesorerror points (\percentrivalhardbetter better).
Specifically,
  \name samples \rivalhardsamples points and proves a further \rivalharderror to be invalid;
  Mathematica samples only \mathematicahardsamples and proves only \mathematicaharderror invalid.
\name's error intervals,
  which integrate the detection of invalid inputs with recomputation,
  account for the difference in the number of points proven invalid.

Not only does \name resolve more cases,
  it resolves nearly a superset of the cases resolved by Mathematica.
\name resolves \totalrivalsamplesorerror points not resolved by Mathematica;
  Mathematica resolves only \totalmathematicasamplesorerror points that \name cannot.
(Without access to Mathematica's internals it's hard to say
  how Mathematica resolves those \totalmathematicasamplesorerror points.
Our best guess is that it may be simplifying the benchmark expressions
  before evaluating them at an input.)
Furthermore,
  when Mathematica is unable to resolve an input,
  it sometimes (\overallmathematicacrashtimeoutormemory cases)
  enters an infinite loop
  or declares that insufficient memory is available.%
  \footnote{We limit the computation to one second per input point;
    all 32GB of memory are available to Mathematica.
    \name never runs out of memory,
      or takes longer than $250\,\text{ms}$ to complete a computation.}
We render these inputs in black in \Cref{fig:mathematica};
  \name never crashes in this way.
Moreover, in \overallmathematicacrash cases, the Mathematica process crashes
  and must be killed with \F{SIGKILL},
  which may cause the user to lose work.

Both \name and Mathematica are unable to resolve some inputs:
  \rivalhardunresolved points for \name and \mathematicahardunresolved for Mathematica.
In these cases it's important to give additional information to the user:
  some of these points may be resolvable with more precision, time, or memory,
  but others cannot be resolved due to algorithmic limitations.
\name's movability flags, which prove \overallrivalunsamplable points unsamplable,
  soundly detect algorithmic constraints;
  Mathematica's various warnings affect
  \overallmathematicaunsamplable points (\timesworsemathematicaoverallunsamplable worse)
  and are unsound, sometimes triggered even for samplable inputs.
Mathematica marks unsamplable \totalrivalsamplablemathematicaunsamplable points
  that \name successfully samples as a result.
Thus, unlike \name's movability flags,
  they do not advise whether the user
  ought to try again with more precision
  or abandon the computation.
At the same time,
  Mathematica reaches its internal precision limit \overallmathematicatimesrivalunknown more often
  than \name because \name's movability flags allow it
  to identify unsamplable points at low precision.

For applications that need to sample valid inputs,
  \name's input search is additionally helpful.
Input search is able to avoid sampling
  \pointssearchsavesfromrivaldomainerror invalid inputs
  and \pointssearchsavesfromrivalunsamplable unsamplable inputs,
  thereby cutting the rate of invalid and unsamplable points
  by \percentpointssearchsaves
  (the blue intervals in \Cref{fig:mathematica}).
This means that using \name for sampling-based error estimation
  would yield even better results
  than those suggested by \Cref{fig:mathematica},
  since input search allows \name to avoid many challenging, invalid points.
The overall picture of these results is that \name is more capable than Mathematica:
  it is able to sample more points, discard more invalid points,
  and leave fewer indeterminate cases unresolved.
It also does so without timeouts, memory exhaustion, or fatal crashes.
Yet it also offers sound guarantees and valuable extensions like input search.

As a result of \name's improvements in interval arithmetic,
  \name is \timesrivalfastermathematica faster than Mathematica.
Mathematica’s timeouts, OOM, and crashes also add to its runtime,
  but \name is \timesrivalfastermathematicawithoutcrashestimeouts
  faster than Mathematica even after excluding these inputs.
This is a consequence of \name's better handling
  of invalid inputs and futile recomputation:
  any interval arithmetic library spends the bulk of its time
  doing high-precision arithmetic.
For one library to be faster than another, then,
  that library must identify
  invalid inputs, detect futile recomputation,
  sample fewer such points, or in some other way
  do fewer high-precision operations.
That’s precisely what \name’s extensions do;
  those extensions ultimately require just a few comparisons,
  so they add negligible overhead time,
  but they dramatically cut down on unnecessary computation.
In fact, \name is also \percentfasterthanmpfi faster
  than MPFI, another commonly-used interval arithmetic library,
  again due to \name's ability to skip unnecessary computation.

\section{Detailed Analysis}
\label{sec:analysis}

This section expands on \Cref{sec:eval}
  to detail the performance
  of error intervals, movability flags, and input search
  individually,
  and to describe particularly challenging benchmarks.
The data show that \name's extensions are effective,
  accomplish their task at low precision,
  and handle challenging inputs from a range of domains.

\begin{figure*}
  \begin{minipage}[t]{.32\linewidth}
    \includegraphics[width=1.0\linewidth]{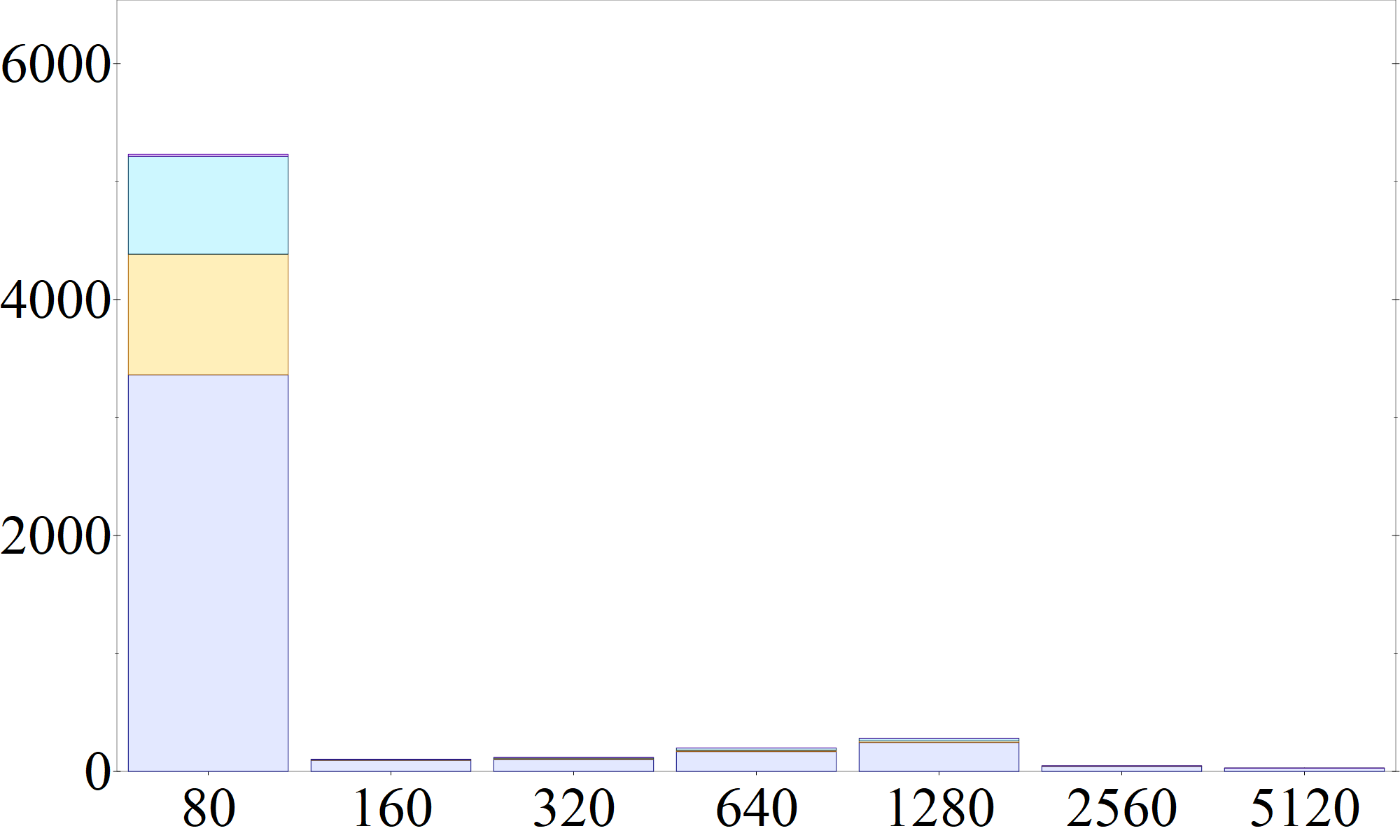}%
    \caption{
      Counts of valid points (bottom, blue),
        domain errors (middle, yellow),
        infinite outputs (top, light blue),
        and precondition failures (too rare to be visible)
        for points sampled without input search.
      80 bits of precision suffices for most points.
    }
    \label{fig:point-precision}
  \end{minipage}\hfill%
  \begin{minipage}[t]{0.32\linewidth}
    \includegraphics[width=\linewidth]{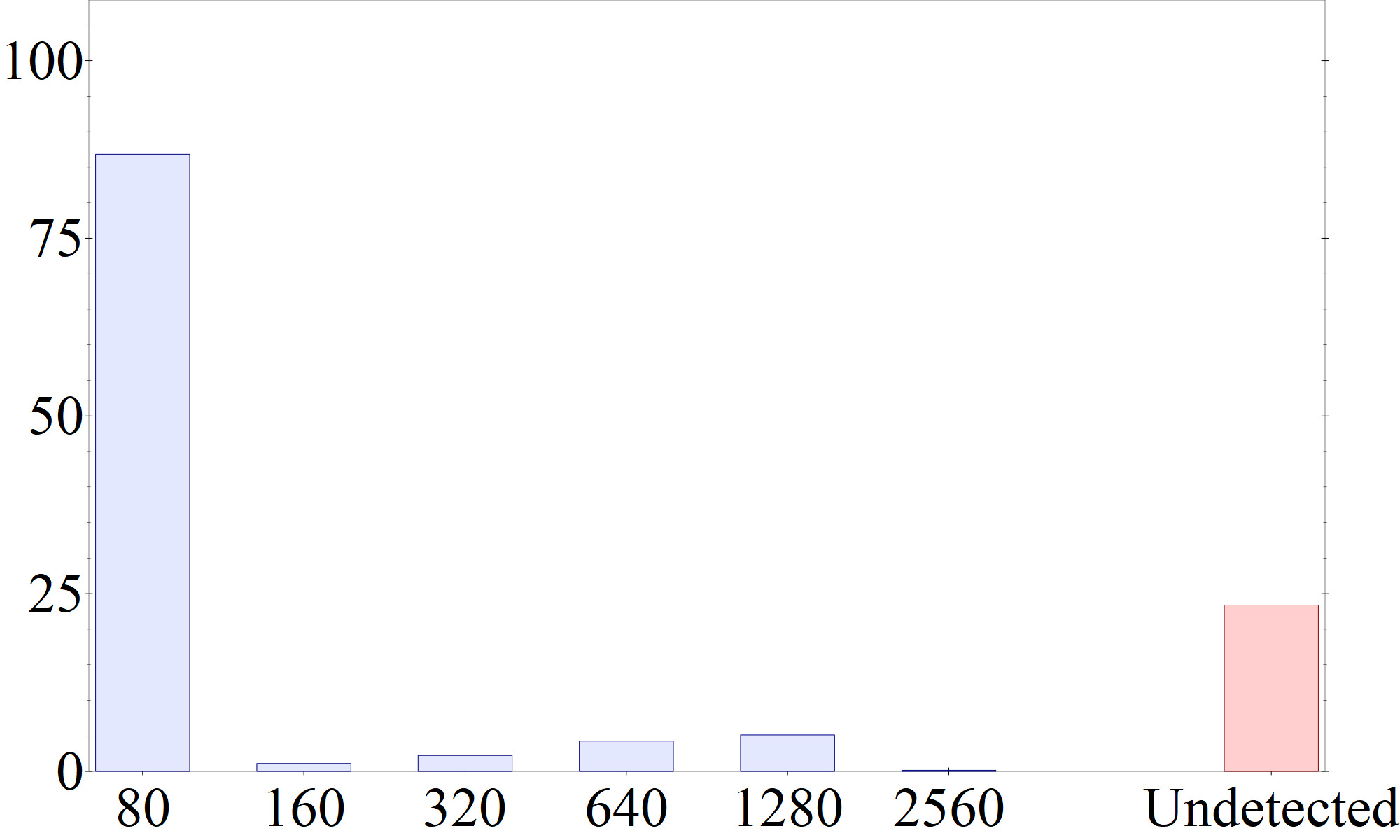}
    \caption{
      Counts of unsamplable points.
      \name's movability flags detect all but
        \PercentUnsamplablePointsUndetectedSearchDisabled
        of unsamplable points,
        with the majority detected at just 80 bits of precision.
      Detecting unsamplable points avoids timeouts and user disappointment.
    }\label{fig:movability}
  \end{minipage}\hfill%
  \begin{minipage}[t]{0.32\linewidth}
    \includegraphics[width=\linewidth]{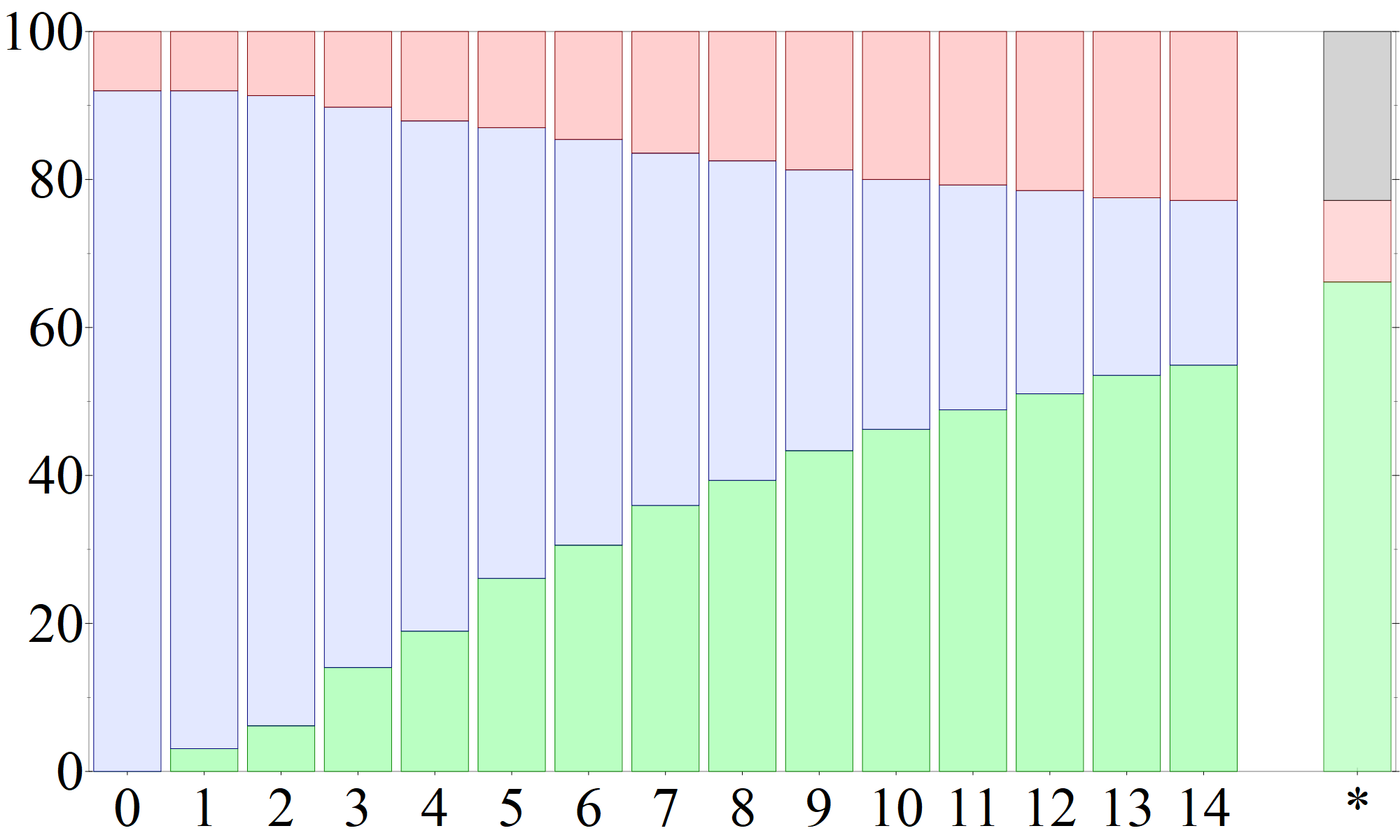}
    \caption{
      Size of the
        $T$ (bottom, green), $O$ (middle, blue), and $F$ (top, red) sets
        after each iteration of \name's input search.
      The ``0'' bar shows the results of range analysis;
        the ``$*$'' bar shows the results of rejection sampling.
      On average,
        input search avoids \SearchInvalidSampleReduction of invalid samples.
    }\label{fig:input-search}
  \end{minipage}%
\end{figure*}

\subsection{Error Intervals}
\label{sec:error-eval}

Error intervals are evaluated by considering
  the internal precision at which they deem input points valid or invalid.
Input search is disabled.
\Cref{fig:point-precision} counts valid and invalid points
  by the internal precision required.
The vast majority of points are proven valid or invalid
  at 80 bits of precision,
  with 1280 bits the second-most common precision.
This curious runner-up precision
  is largely due to trigonometric functions:
  the largest double-precision values are as large as $2^{1024}$,
  and 1280 bits is enough precision
  to identify their period modulo $\pi$.
Recomputation automatically increases the precision to this level.

Compared to valid points,
  a larger fraction of invalid points are detected at 80 bits of precision,
  likely because for these points,
  \name doesn't need to compute a ground truth value,
  just prove that the point is invalid.
The majority of invalid points
  result from infinite outputs and domain errors,
  while user-specified precondition failures are rare.%
\footnote{Infinite inputs are totally absent,
  unsurprising given the small fraction of inputs they represent.}
Specifically among expressions with preconditions,
  \PercentFailPreconditionOfBenchmarksWithPrecondition
  of sampled points are invalid,
  suggesting that error intervals work well
  even under these most challenging conditions.

The benchmark named ``Harley's example''%
\footnote{This benchmark appears as a case study
  in the original Herbie paper, suggesting its importance~\cite{herbie}.}
  demonstrates the importance of soundness
  for error and boolean intervals:
\[
\begin{array}{l}
  \K{pre}\:c_p > 0 \land c_n > 0 \: \\
  \K{let}\:a = 1 / (1 + e^{-s}), b = 1 / (1+e^{-t})\:\K{in} \\
  \left( a^{c_p} \cdot (1 - a)^{c_p}\right) / \left(b^{c_n} \cdot (1 - b)^{c_n}\right)
\end{array}
\]
Some inputs to this expression produce outputs just below
  or just above the point where the output rounds to infinity;
  error intervals force recomputation
  until it is clear whether which of the points are valid,
  which can require as many as 5120 bits.

\subsection{Movability Flags}
\label{sec:movable-eval}

Movability flags are evaluated by considering
  the precision at which they can prove a point unsamplable.
\Cref{fig:movability} shows that movability flags
  detect a \PercentUnsamplableDetected of unsamplable points
  and thus prevent most futile recomputations.
Unsamplable points are found (and warnings issued)
  for \NumBenchmarksWithUnsamplablePointsSearchDisabled
  input expressions,
  mostly involving ratios of exponential functions.
For example, the \F{expq2} benchmark
  involves the expression $\exp(x) / (\exp(x) - 1)$;
  its unsamplable points can be detected with just 80 bits of precision.
For more difficult expressions, higher precision is sometimes necessary
  to prove unsamplability;
  consider the \F{expq3} benchmark:
\[
\frac{\varepsilon \cdot \left(e^{(a + b) \cdot \varepsilon} - 1\right)}
{\left(e^{a \cdot \varepsilon} - 1\right) \cdot \left(e^{b \cdot
    \varepsilon} - 1\right)},
\text{ with } {-1} < \varepsilon < 1.
\]
\noindent
This benchmark is unsamplable when
  $a = \varepsilon = 10^{-200}$ and $b = 10^{200}$:
  the second term in the denominator overflows,
  but at low precisions the first rounds down to $0$,
  making the denominator possibly zero.
Only at 2560 bits of precision%
\footnote{Because, with a minimum double-precision exponent of $-1024$,
  the term $e^{a \cdot \varepsilon}$
  needs at least $2048$ bits to clearly differ from $1$.
}
  does it become clear that the denominator is very large,
  which is necessary to prove the expression unsamplable.

By contrast, cases where movability flags fail
  often seem like oversights by the benchmark developers
  or the users those benchmarks are derived from.
For example, the \F{regression} suite's \F{exp-w} benchmark,
  $e^{-w} \cdot \F{pow}(\ell, e^w),$
  fails to detect unsamplable points
  when $\ell$ is negative and $w$ large and positive,
  $\F{pow}(\ell, e^w)$ raises a negative number
  to a possibly-infinite power.
Since \name does not track movability for error intervals,
  it does not detect that recomputation here is futile.
But the author of this expression
  likely intended $\ell$ to be close to $1$,
  instead of negative.
Similarly, movability flags fail
  on textbook examples of difficult-to-analyze functions
  like ``Kahan's Monster'':
\[
\begin{array}{l}
  \K{pre}\:y > 0 \\
  \K{let}\:z =
    \left(|y - \sqrt{y^2 + 1}| - 1/(y + \sqrt{y^2 + 1})\right),
    z_2 = z \cdot z\:\K{in}\\
  \K{if}\:z_2 = 0\:\K{then}\:1\:\K{else}\:(\exp{z_2} - 1) / z_2
\end{array}
\]
In the reals, $z$ is exactly $0$ so the program yields $1$;
  with interval arithmetic, $z$ is a narrow interval straddling $0$
  and \name cannot rule out a division by zero in the \F{else} branch.
Of course, since real computation is undecidable,
  constructing counterexamples like this will be possible for any algorithm.

\subsection{Input Search}
\label{sec:search-eval}

Input search is evaluated by considering
  the fraction of the input space in the $T$, $O$, and $F$ sets.
\Cref{fig:input-search} shows these fractions
  as a percentage of the total search space,
  averaged over all benchmarks,
  at each input search iteration.
Over \NumberOfSearchIterations iterations,
  input search discards \PercentSpaceF of the total input space
  and proves an additional \PercentSpaceT to contain only valid points.
As a result, \GuaranteeSampleChance of sampled points
  are guaranteed to be valid and do not need to be rejection-sampled,
  and \percentpointssearchsaves of invalid points are avoided.

Input search helps \name sample from benchmarks
  that would otherwise be infeasible to sample from.
Consider \F{Jmat.Real.erfi}:%
\footnote{Specifically, of the two benchmarks with that name,
  the one labeled ``branch $x$ greater than or equal to 5'',
  which we assume is mislabeled.}
\[
  \left( \frac1{\sqrt\pi} e^{x \cdot x} \right) \cdot \left(\frac1{|x|} +
  \frac12 \frac1{|x|^3} + \frac34 \frac1{|x|^5} + \frac{15}{8}
  \frac1{|x|^7} \right),
  \text{ with } x \ge \frac12
\]
Inputs above approximately $30$ overflow,
  so roughly 0.15\% of double-precision floating-point values are valid.
But input search identifies the valid range with ease,
  and only 1.4\% of samples are ultimately rejected.

Input search can also sometimes prove
  that there are no valid inputs at all.
In the benchmark
\[
\sqrt{\log(x)+\sin(x)} / \sin^{-1}(x+2),
\]
the term $\sin^{-1}(x+2)$ requires $-3 \le x \le -1$,
  while $\log(x)$ requires $x > 0$,
  so the benchmark has no valid points.
This benchmark was originally derived from a user submission;
  input search allows \name to provide a helpful error message
  to the (doubtless confused) user.

\subsection{Generality}

\begin{table*}
  \begin{tabular}{lr|rrrr|rr|rrr}
     & & \multicolumn{4}{c}{Error Intervals} & \multicolumn{2}{|c|}{Movability Flags} & \multicolumn{3}{c}{Input Search} \\
Benchmarks & $N$ & Valid & Pre. & Inf. & Error & Total & Detected & $|T|$ & $|O|$ & $|F|$ \\  \hline
hamming & $28$ & $76.8\%$ & $0.0\%$ & $6.3\%$ & $16.9\%$ & $4.6\%$ & $100.0\%$ & $53.9\%$ & $20.1\%$ & $26.0\%$ \\
haskell & $270$ & $67.1\%$ & $0.0\%$ & $17.3\%$ & $15.6\%$ & $0.1\%$ & $100.0\%$ & $60.5\%$ & $25.1\%$ & $14.4\%$ \\
libraries & $50$ & $69.9\%$ & $0.0\%$ & $18.2\%$ & $11.9\%$ & $3.0\%$ & $32.8\%$ & $60.1\%$ & $12.2\%$ & $27.7\%$ \\
mathematics & $39$ & $65.4\%$ & $2.8\%$ & $5.0\%$ & $26.9\%$ & $17.6\%$ & $88.1\%$ & $36.3\%$ & $20.8\%$ & $43.0\%$ \\
numerics & $38$ & $79.5\%$ & $0.0\%$ & $15.3\%$ & $5.1\%$ & $0.7\%$ & $90.6\%$ & $41.2\%$ & $8.2\%$ & $50.6\%$ \\
physics & $31$ & $51.3\%$ & $0.0\%$ & $15.9\%$ & $32.9\%$ & $6.7\%$ & $96.9\%$ & $32.9\%$ & $40.8\%$ & $26.3\%$ \\
regression & $13$ & $87.3\%$ & $0.0\%$ & $5.3\%$ & $7.4\%$ & $8.3\%$ & $33.4\%$ & $53.0\%$ & $21.0\%$ & $25.9\%$ \\
tutorial & $3$ & $90.1\%$ & $0.0\%$ & $9.9\%$ & $0.0\%$ & $0.0\%$ & $100.0\%$ & $84.8\%$ & $6.8\%$ & $8.4\%$ \\
 \hline 
Herbie v1.4 & $481$ & $67.3\%$ & $0.3\%$ & $14.8\%$ & $17.6\%$ & $3.0\%$ & $81.0\%$ & $54.9\%$ & $22.3\%$ & $22.8\%$ \\
User Inputs & $4888$ & $63.5\%$ & $0.0\%$ & $13.5\%$ & $23.0\%$ & $5.0\%$ & $55.8\%$ & $61.1\%$ & $17.6\%$ & $21.3\%$ \\
FPBench & $126$ & $79.9\%$ & $14.2\%$ & $5.9\%$ & $0.0\%$ & $2.1\%$ & $93.3\%$ & $19.8\%$ & $7.6\%$ & $72.6\%$ \\

  \end{tabular}
  \caption{
    \name's success on subsets and additional benchmark sets.
    The table shows the percentage of points
      that are valid or invalid for various reasons;
      the total fraction unsamplable points
      and the fraction of those detected by movability flags;
      and the sizes of $|T|$, $|O|$, and $|F|$ after input search.
    Results for error intervals and movability flags
      are with input search disabled
      to separate the impacts of each extension.
  }
  \label{tbl:by-suite}
\end{table*}

To demonstrate the generality of
  error intervals, movability flags, and input search
  across domains, we perform a subgroup analysis.
\Cref{tbl:by-suite} breaks down
  the Herbie~1.4 benchmarks into their 8 component suites.

\name is effective in a wide variety of circumstances.
Different benchmark suites
  range from 51.3\% (\F{physics})
  to 90.1\% (\F{tutorial}) valid points,%
\footnote{
The \F{physics} benchmarks also trip up input search
  by heavily using trigonometric functions.
The \F{tutorial} suite has three very simple expressions,
  and its invalid points all come from a single overflow.
}
  and the effectiveness of input search correspondingly varies.
Movability flags are
  \HammingPercentUnsamplableDetectedSearchDisabled
  effective in the \F{hamming} benchmark suite,
  which has many ratios of exponential functions,
  but much less effective in the \F{regression} suite,
  which is composed of difficult-to-analyze examples
  that once triggered timeouts or crashes
  in the Herbie project that compiled these benchmarks.

We also consider  two additional benchmark sets
  to evaluate \name's generality:
  the \TotalFPBenchBenchmarks standard FPBench benchmarks
  and \TotalDemoBenchmarks publically-accessible user submissions
  to the Herbie web demo.
The FPBench suite is intended
  for benchmarking floating-point analysis tools
  and makes extensive use of preconditions.
Despite this, input search samples
  a valid point with \FPBenchValidProbability probability,
  discarding \FPBenchF of the input space,
  and movability flags are extremely effective
  at detecting unsamplable points.
The user-submitted benchmarks are the other extreme:
  unorganized, duplicative, and sometimes copied from the other two suites.
\name nonetheless preforms well---especially input search,
  since there are no preconditions among the user submissions
  so detecting valid inputs is easier.
Both the subset analysis and validation sets
  thus show that error intervals, movability flags, and input search
  are effective in general across domains.

\section{Discussion}
\label{sec:discussion}

The \name library,
  containing error intervals, movability flags, and input search,
  is publicly available as open source software.%
\footnote{URL will be available after anonymization is lifted.}

\subsection{Implementing \name}

Ensuring soundness and weak completeness in \name required care.
We started by splitting each function's domain
  into \textit{monotonic regions}.
For example, \F{fmod} has monotonic regions
  defined by $k y \leq x < (k + 1) y$ for some $k$,
  monotonically increasing in $x$ and decreasing in $y$.%
\footnote{For positive $x$; \F{fmod} is odd in its first argument.}
Finding the minimum or maximum of a function
  then requires finding the maximum over all of the monotonic regions
  in the input intervals.
Soundness is guaranteed
  by the monotonicity of the function within each region,
  while weak completeness is guaranteed
  because the output interval's endpoints
  come from particular points in some monotonic region.
We wrote paper proofs for each function
  to ensure that we accurately identified the monotonic regions,
  and verified that we correctly chose
  the maximum and minimum points
  using Mathematica.

To catch one-off oversights and typos,
  we additionally deployed millions of random tests
  for soundness on both wide and narrow intervals,
  weak completeness (by subdividing intervals),
  and movability (by recomputing in higher precision).
These tests further improved \name's reliability.
One random test found that \F{sin} internally rounded $\pi$ to nearest
  instead of rounding $\pi$ differently for the two endpoints;
  this is a soundness bug.
Another random test found that
  defining $\F{fmod}(x, y)$ by $\F{trunc}(x / y) \cdot y$
  fails to account for the correlation between $x / y$ and $x$;
  this is a completeness bug
  fixed with a custom implementation of \F{fmod}.
Finally, a manual audit found that
  movability flags weren't being set when \F{pow} underflowed to $0$,
  which lead to movability flags missing some unsamplable points.
We replaced the \F{pow} movability logic
  with the identity $\F{pow}(x, y) = \F{exp}(y \cdot \F{log}(x))$
  to avoid duplicating the movability logic.

\subsection{Deploying \name}

We began developing \name three years ago
  to replace unsound sampling heuristics
  for a popular floating point analysis and error estimation tool.%
\footnote{Tool will be identified once anonymity is lifted.}
\name was deployed to production one year ago
  and has since been used by thousands of users.

\name was originally a traditional interval arithmetic library
  aimed at soundly computing error-free ground truths.
At first, our main concern was speed:
  interval arithmetic generally requires evaluating each function twice
  (once for each endpoint)
  while the existing unsound heuristics evaluated each function once,
  so we expected performance to be twice as bad.
These concerns were unfounded.
Instead, though ``difficult'' points needed higher precision in \name
  (in order to soundly prove they were correctly sampled),
  \name could sample ``easy'' points with lower precision than the earlier heuristics,
  so replacing them with \name did not slow down sampling.

Instead of speed, the core problem
  with this initial version of \name
  was its handling of invalid inputs.
Initially, \name used \F{NaN} endpoints to indicate domain errors,
  much like the open source MPFI library~\cite{mpfi}.
But those \F{NaN} endpoints would sometimes be lost in later computations,
  leading to \name erroneously computing outputs for invalid inputs.
We developed error intervals
  to guarantee that \name only sampled valid points.
With this problem fixed,
  we realized that unsamplable points
  consumed an unjustifiably large share of sampling time.
Movability flags were developed out of the realization
  that overflow was behind most of these unsamplable points
  and could be easily detected at low precision.
This sped up sampling,
  and allowed samping inputs
  for many larger expressions that used to time out.
But this only revealed
  that most of those sampled inputs were invalid or unsamplable;
  input search was developed to address this issue.

All told, \name is sounder, faster, and more user-friendly
  than the heuristic approach it replaced.
In rare cases, the earlier heuristics computed incorrect ground truth values,
  even for textbook examples such as $\sqrt{x+1} - \sqrt{x}$;
  \name never does.
Its input search and movability flags
  allow estimating error for larger benchmarks
  than the earlier heuristics could support.
And by rigorously handling error cases,
  algorithmic limitations,
  and even edge cases like benchmarks with no valid inputs,
  \name allows for clearer and more actional error messages for users.
Over all, through error intervals, movability flags, and input search,
  \name successfully addressed user-visible issues
  and improved the application it was embedded in.

\section{Related Work}

\paragraph{Computable Reals}

The problem of computing the exact value of a mathematical expression
  has a long history.
Early work~\cite{computable-reals,boehm-idea,boehm-compare,boehm-java}
  explored various representations for ``computable real'' values
  and developed algorithms for computing functions over them.
One common representation involving scaled 2-adic numbers
  yields particularly simple implementations%
  ~\cite{constructive-real-verified,constructive-real-python},
  but is slow, especially for expressions having certain forms~\cite{constructive-real-tweaks}.
Interval arithmetic was later discovered
  to be better suited to this task, as it tends to be faster~\cite{boehm-fast}.

Interval arithmetic has been widely available since 1967's XSC~\cite{xsc}.
Today,
  the Boost~\cite{boost-ivals} and Gaol~\cite{gaol} libraries
  are widely used,
  both providing interval arithmetic with double-precision endpoints.
The more modern Moore library~\cite{moore-ivals}
  provides arbitrary-precision endpoints as well.
The IEEE~1788 standard formalizes several forms of interval arithmetic~\cite{ieee-ivals},
  including interval decorations, which could be used
  to implement \name's error intervals and movability flags.
Recent work welds computable real algorithms
  to decision procedures for special cases like as rational arithmetic,
  and provides an API for supporting the combination~\cite{api-for-real-numbers}.
Despite its unique benefits for user-facing calculator applications,
  general-purpose error estimation generally lies outside the special cases,
  so \name's methods are orthogonal.
RealPaver uses interval arithmetic
  to characterize the solution sets of nonlinear constraints~\cite{applied-interval-analysis},
  to aid in pruning for global function optimization~\cite{realpaver},
  similar to \name's boolean intervals and input search.
However, since RealPaver is targetted specifically
  at mathematical optimization problems,
  it neither tracks rounding error
  nor provides error intervals
  nor guarantees uniform sampling.

\paragraph{Error Estimation}

There is a large literature on measuring floating-point rounding error.
One approach~\cite{rosa,daisy,salsa-1}
  uses one interval to track value ranges
  and another interval to track rounding errors,
  producing a sound error bound for the whole computation.
This approach can be extended
  to and intraprocedural, modular abstract interpretation~\cite{precisa,salsa-2,martel-types}.
Error Taylor forms can also bound floating-point error~\cite{fptaylor,fptuner,satire}
  and generally providing tighter bounds than interval-based bounds,
  as does semidefinite programming~\cite{real2float}.
Both error Taylor forms and semidefinite characterize error at a fixed input point
  and then apply sound global optimization, such as Gelpia~\cite{gelpia},
  which uses interval arithmetic to compute sound bounds.

An alternative approach to error estimation
  aims to generate inputs with particularly large rounding error.
S3FP~\cite{s3fp} treats the expression itself as a black box,
  while FPGen~\cite{fpgen} uses symbolic execution
  to more efficiently target problematic input ranges.
Other tools target rare errors~\cite{lsga,eagt,autornp}
  or attempt to maximize code coverage~\cite{fpse,coverme}.
In each case, these inputs are generated, selected, or mutated
  by comparing to an error-free ground-truth,
  usually computed with arbitrary precision or interval arithmetic.
\name's enhancements to interval arithmetic
  are especially attractive for input generation methodologies,
  since they often generate invalid inputs
  (which they should select against)
  or unsamplable inputs (which they should ignore).

\paragraph{Error Detection}

Some tools aim to localize rounding error within a large program.
FpDebug~\cite{fpdebug} and Herbgrind~\cite{herbgrind}
  use Valgrind to target x86 binaries
  and detect rounding error using higher-precision shadow values.
PositDebug does the same but for posit, rather than floating-point,
  values~\cite{positdebug}.
\citet{baozhang} instead detect control-flow divergence
  and re-executing the program at higher precision.
Herbie~\cite{herbie}, an error repair tool,
  uses heuristics to select a sufficiently high shadow value precision;
  the algorithm is unsound and can compute incorrect ground truths.
TAFFO replaces user-selected inputs with run-time values
  to dynamically substitute fixed-point for floating-point computation~\cite{taffo}.
All of these need to compute an error-free ground truth
  and are susceptible to invalid or unsamplable inputs.

\section{Conclusion}

This paper addresses three problems
  common to traditional interval arithmetic implementations.
Error intervals track the necessity and possibility of domain errors,
  and can be combined with other boolean constraints
  to provide a rich description of valid and invalid inputs.
Movability flags add another pair of boolean flags
  to track the impact of overflows,
  identifying inputs
  where algorithmic limitations
  prevent identifying a correct ground-truth value.
Finally, input search leverages
  both error intervals and movability flags
  to identify regions of valid, samplable points,
  even when those regions have low measure in the input space.
\name implements all three features and
  outperforms the state-of-the-art Mathematica 12.1.1 \mathN function,
  resolving \percentrivalhardbetter more inputs,
  returning indeterminate results \timesworsemathematicaoverallunsamplable less often,
  and avoiding all cases of fatal error.

\bibliographystyle{ACM-Reference-Format}

\begin{thebibliography}{56}


\ifx \showCODEN    \undefined \def \showCODEN     #1{\unskip}     \fi
\ifx \showDOI      \undefined \def \showDOI       #1{#1}\fi
\ifx \showISBNx    \undefined \def \showISBNx     #1{\unskip}     \fi
\ifx \showISBNxiii \undefined \def \showISBNxiii  #1{\unskip}     \fi
\ifx \showISSN     \undefined \def \showISSN      #1{\unskip}     \fi
\ifx \showLCCN     \undefined \def \showLCCN      #1{\unskip}     \fi
\ifx \shownote     \undefined \def \shownote      #1{#1}          \fi
\ifx \showarticletitle \undefined \def \showarticletitle #1{#1}   \fi
\ifx \showURL      \undefined \def \showURL       {\relax}        \fi
\providecommand\bibfield[2]{#2}
\providecommand\bibinfo[2]{#2}
\providecommand\natexlab[1]{#1}
\providecommand\showeprint[2][]{arXiv:#2}

\bibitem[\protect\citeauthoryear{??}{iee}{2015}]%
        {ieee-ivals}
 \bibinfo{year}{2015}\natexlab{}.
\newblock \showarticletitle{{IEEE Standard for Interval Arithmetic}}.
\newblock \bibinfo{journal}{\emph{IEEE Std. 1788-2015}} (\bibinfo{year}{2015}).
\newblock


\bibitem[\protect\citeauthoryear{Altman, Gill, and McDonald}{Altman
  et~al\mbox{.}}{2003}]%
        {num-issues-in-stat}
\bibfield{author}{\bibinfo{person}{Micah Altman}, \bibinfo{person}{Jeff Gill},
  {and} \bibinfo{person}{Michael~P. McDonald}.}
  \bibinfo{year}{2003}\natexlab{}.
\newblock \bibinfo{booktitle}{\emph{Numerical Issues in Statistical Computing
  for the Social Scientist}}.
\newblock \bibinfo{publisher}{Springer-Verlag}. 1--11 pages.
\newblock


\bibitem[\protect\citeauthoryear{Altman and McDonald}{Altman and
  McDonald}{2003}]%
        {num-replication}
\bibfield{author}{\bibinfo{person}{Micah Altman} {and}
  \bibinfo{person}{Michael~P. McDonald}.} \bibinfo{year}{2003}\natexlab{}.
\newblock \showarticletitle{Replication with attention to numerical accuracy}.
\newblock \bibinfo{journal}{\emph{Political Analysis}} \bibinfo{volume}{11},
  \bibinfo{number}{3} (\bibinfo{year}{2003}), \bibinfo{pages}{302--307}.
\newblock
\urldef\tempurl%
\url{http://pan.oxfordjournals.org/content/11/3/302.abstract}
\showURL{%
\tempurl}


\bibitem[\protect\citeauthoryear{{Bagnara}, {Carlier}, {Gori}, and
  {Gotlieb}}{{Bagnara} et~al\mbox{.}}{2013}]%
        {fpse}
\bibfield{author}{\bibinfo{person}{R. {Bagnara}}, \bibinfo{person}{M.
  {Carlier}}, \bibinfo{person}{R. {Gori}}, {and} \bibinfo{person}{A.
  {Gotlieb}}.} \bibinfo{year}{2013}\natexlab{}.
\newblock \showarticletitle{Symbolic Path-Oriented Test Data Generation for
  Floating-Point Programs}. In \bibinfo{booktitle}{\emph{2013 IEEE Sixth
  International Conference on Software Testing, Verification and Validation}}.
  \bibinfo{pages}{1--10}.
\newblock


\bibitem[\protect\citeauthoryear{Bao and Zhang}{Bao and Zhang}{2013}]%
        {baozhang}
\bibfield{author}{\bibinfo{person}{Tao Bao} {and} \bibinfo{person}{Xiangyu
  Zhang}.} \bibinfo{year}{2013}\natexlab{}.
\newblock \showarticletitle{On-the-fly Detection of Instability Problems in
  Floating-point Program Execution}.
\newblock \bibinfo{journal}{\emph{SIGPLAN Not.}} \bibinfo{volume}{48},
  \bibinfo{number}{10} (\bibinfo{date}{Oct.} \bibinfo{year}{2013}),
  \bibinfo{pages}{817--832}.
\newblock
\showISSN{0362-1340}
\urldef\tempurl%
\url{https://doi.org/10.1145/2544173.2509526}
\showDOI{\tempurl}


\bibitem[\protect\citeauthoryear{Benz, Hildebrandt, and Hack}{Benz
  et~al\mbox{.}}{2012}]%
        {fpdebug}
\bibfield{author}{\bibinfo{person}{Florian Benz}, \bibinfo{person}{Andreas
  Hildebrandt}, {and} \bibinfo{person}{Sebastian Hack}.}
  \bibinfo{year}{2012}\natexlab{}.
\newblock \showarticletitle{A Dynamic Program Analysis to Find Floating-point
  Accuracy Problems} \emph{(\bibinfo{series}{PLDI '12})}.
  \bibinfo{publisher}{ACM}, \bibinfo{address}{New York, NY, USA},
  \bibinfo{pages}{453--462}.
\newblock
\showISBNx{978-1-4503-1205-9}
\urldef\tempurl%
\url{http://doi.acm.org/10.1145/2254064.2254118}
\showURL{%
\tempurl}


\bibitem[\protect\citeauthoryear{Boehm}{Boehm}{1987}]%
        {boehm-idea}
\bibfield{author}{\bibinfo{person}{Hans~J. Boehm}.}
  \bibinfo{year}{1987}\natexlab{}.
\newblock \showarticletitle{Constructive Real Interpretation of Numerical
  Programs} \emph{(\bibinfo{series}{SIGPLAN '87})}. \bibinfo{publisher}{ACM},
  \bibinfo{pages}{214--221}.
\newblock
\showISBNx{0-89791-235-7}
\urldef\tempurl%
\url{https://doi.org/10.1145/29650.29673}
\showDOI{\tempurl}


\bibitem[\protect\citeauthoryear{Boehm}{Boehm}{2004}]%
        {boehm-java}
\bibfield{author}{\bibinfo{person}{Hans~J. Boehm}.}
  \bibinfo{year}{2004}\natexlab{}.
\newblock \showarticletitle{The constructive reals as a Java Library}.
\newblock \bibinfo{journal}{\emph{Journal Logical and Algebraic Programming}}
  \bibinfo{volume}{64} (\bibinfo{year}{2004}), \bibinfo{pages}{3--11}.
\newblock


\bibitem[\protect\citeauthoryear{Boehm}{Boehm}{2020}]%
        {api-for-real-numbers}
\bibfield{author}{\bibinfo{person}{Hans-J. Boehm}.}
  \bibinfo{year}{2020}\natexlab{}.
\newblock \showarticletitle{Towards an API for the Real Numbers}. In
  \bibinfo{booktitle}{\emph{Proceedings of the 41st ACM SIGPLAN Conference on
  Programming Language Design and Implementation}} (London, UK)
  \emph{(\bibinfo{series}{PLDI 2020})}. \bibinfo{publisher}{Association for
  Computing Machinery}, \bibinfo{address}{New York, NY, USA},
  \bibinfo{pages}{562–576}.
\newblock
\showISBNx{9781450376136}
\urldef\tempurl%
\url{https://doi.org/10.1145/3385412.3386037}
\showDOI{\tempurl}


\bibitem[\protect\citeauthoryear{Boehm, Cartwright, Riggle, and
  O'Donnell}{Boehm et~al\mbox{.}}{1986}]%
        {boehm-compare}
\bibfield{author}{\bibinfo{person}{Hans~J. Boehm}, \bibinfo{person}{Robert
  Cartwright}, \bibinfo{person}{Mark Riggle}, {and} \bibinfo{person}{Michael~J.
  O'Donnell}.} \bibinfo{year}{1986}\natexlab{}.
\newblock \showarticletitle{Exact Real Arithmetic: A Case Study in Higher Order
  Programming} \emph{(\bibinfo{series}{LFP '86})}. \bibinfo{publisher}{ACM},
  \bibinfo{pages}{162--173}.
\newblock
\showISBNx{0-89791-200-4}
\urldef\tempurl%
\url{https://doi.org/10.1145/319838.319860}
\showDOI{\tempurl}


\bibitem[\protect\citeauthoryear{Briggs}{Briggs}{2019}]%
        {gelpia}
\bibfield{author}{\bibinfo{person}{Ian Briggs}.}
  \bibinfo{year}{2019}\natexlab{}.
\newblock \bibinfo{title}{Global Extrema Locator Parallelization for Interval
  Arithmetic}.
\newblock
\newblock
\urldef\tempurl%
\url{https://github.com/soarlab/gelpia}
\showURL{%
\tempurl}


\bibitem[\protect\citeauthoryear{Briggs}{Briggs}{2006}]%
        {constructive-real-python}
\bibfield{author}{\bibinfo{person}{Keith Briggs}.}
  \bibinfo{year}{2006}\natexlab{}.
\newblock \showarticletitle{Implementing exact real arithmetic in python, C++
  and C}.
\newblock \bibinfo{journal}{\emph{Theoretical Computer Science}}
  \bibinfo{volume}{351}, \bibinfo{number}{1} (\bibinfo{year}{2006}),
  \bibinfo{pages}{74 -- 81}.
\newblock
\showISSN{0304-3975}
\urldef\tempurl%
\url{https://doi.org/10.1016/j.tcs.2005.09.058}
\showDOI{\tempurl}
\newblock
\shownote{Real Numbers and Computers.}


\bibitem[\protect\citeauthoryear{{Cherubin}, {Cattaneo}, {Chiari}, {Bello}, and
  {Agosta}}{{Cherubin} et~al\mbox{.}}{2020}]%
        {taffo}
\bibfield{author}{\bibinfo{person}{S. {Cherubin}}, \bibinfo{person}{D.
  {Cattaneo}}, \bibinfo{person}{M. {Chiari}}, \bibinfo{person}{A.~D. {Bello}},
  {and} \bibinfo{person}{G. {Agosta}}.} \bibinfo{year}{2020}\natexlab{}.
\newblock \showarticletitle{TAFFO: Tuning Assistant for Floating to Fixed Point
  Optimization}.
\newblock \bibinfo{journal}{\emph{IEEE Embedded Systems Letters}}
  \bibinfo{volume}{12}, \bibinfo{number}{1} (\bibinfo{year}{2020}),
  \bibinfo{pages}{5--8}.
\newblock


\bibitem[\protect\citeauthoryear{Chiang, Baranowski, Briggs, Solovyev,
  Gopalakrishnan, and Rakamari\'c}{Chiang et~al\mbox{.}}{2017}]%
        {fptuner}
\bibfield{author}{\bibinfo{person}{Wei-Fan Chiang}, \bibinfo{person}{Mark
  Baranowski}, \bibinfo{person}{Ian Briggs}, \bibinfo{person}{Alexey Solovyev},
  \bibinfo{person}{Ganesh Gopalakrishnan}, {and} \bibinfo{person}{Zvonimir
  Rakamari\'c}.} \bibinfo{year}{2017}\natexlab{}.
\newblock \showarticletitle{Rigorous Floating-point Mixed-precision Tuning}
  \emph{(\bibinfo{series}{POPL})}. \bibinfo{pages}{300--315}.
\newblock
\showISBNx{978-1-4503-4660-3}
\urldef\tempurl%
\url{https://doi.org/10.1145/3009837.3009846}
\showDOI{\tempurl}


\bibitem[\protect\citeauthoryear{Chiang, Gopalakrishnan, Rakamaric, and
  Solovyev}{Chiang et~al\mbox{.}}{2014}]%
        {s3fp}
\bibfield{author}{\bibinfo{person}{Wei-Fan Chiang}, \bibinfo{person}{Ganesh
  Gopalakrishnan}, \bibinfo{person}{Zvonimir Rakamaric}, {and}
  \bibinfo{person}{Alexey Solovyev}.} \bibinfo{year}{2014}\natexlab{}.
\newblock \showarticletitle{Efficient Search for Inputs Causing High
  Floating-Point Errors}. In \bibinfo{booktitle}{\emph{Proceedings of the 19th
  ACM SIGPLAN Symposium on Principles and Practice of Parallel Programming}}
  \emph{(\bibinfo{series}{PPoPP ’14})}. \bibinfo{publisher}{Association for
  Computing Machinery}, \bibinfo{address}{New York, NY, USA},
  \bibinfo{pages}{43–52}.
\newblock
\showISBNx{9781450326568}
\urldef\tempurl%
\url{https://doi.org/10.1145/2555243.2555265}
\showDOI{\tempurl}


\bibitem[\protect\citeauthoryear{Chowdhary, Lim, and Nagarakatte}{Chowdhary
  et~al\mbox{.}}{2020}]%
        {positdebug}
\bibfield{author}{\bibinfo{person}{Sangeeta Chowdhary}, \bibinfo{person}{Jay~P.
  Lim}, {and} \bibinfo{person}{Santosh Nagarakatte}.}
  \bibinfo{year}{2020}\natexlab{}.
\newblock \showarticletitle{Debugging and Detecting Numerical Errors in
  Computation with Posits}. In \bibinfo{booktitle}{\emph{Proceedings of the
  41st ACM SIGPLAN Conference on Programming Language Design and
  Implementation}} (London, UK) \emph{(\bibinfo{series}{PLDI 2020})}.
  \bibinfo{publisher}{Association for Computing Machinery},
  \bibinfo{address}{New York, NY, USA}, \bibinfo{pages}{731–746}.
\newblock
\showISBNx{9781450376136}
\urldef\tempurl%
\url{https://doi.org/10.1145/3385412.3386004}
\showDOI{\tempurl}


\bibitem[\protect\citeauthoryear{Damouche, Martel, and Chapoutot}{Damouche
  et~al\mbox{.}}{2015}]%
        {salsa-2}
\bibfield{author}{\bibinfo{person}{Nasrine Damouche}, \bibinfo{person}{Matthieu
  Martel}, {and} \bibinfo{person}{Alexandre Chapoutot}.}
  \bibinfo{year}{2015}\natexlab{}.
\newblock \showarticletitle{Formal Methods for Industrial Critical Systems:
  20th International Workshop, FMICS 2015 Oslo, Norway, June 22-23, 2015
  Proceedings}.
\newblock  (\bibinfo{year}{2015}), \bibinfo{pages}{31--46}.
\newblock
\showISBNx{978-3-319-19458-5}


\bibitem[\protect\citeauthoryear{Darulova and Kuncak}{Darulova and
  Kuncak}{2014}]%
        {rosa}
\bibfield{author}{\bibinfo{person}{Eva Darulova} {and} \bibinfo{person}{Viktor
  Kuncak}.} \bibinfo{year}{2014}\natexlab{}.
\newblock \showarticletitle{Sound Compilation of Reals}
  \emph{(\bibinfo{series}{POPL})}. \bibinfo{numpages}{14}~pages.
\newblock
\showISBNx{978-1-4503-2544-8}
\urldef\tempurl%
\url{http://doi.acm.org/10.1145/2535838.2535874}
\showURL{%
\tempurl}


\bibitem[\protect\citeauthoryear{Das, Briggs, Gopalakrishnan, Krishnamoorthy,
  and Panchekha}{Das et~al\mbox{.}}{2020}]%
        {satire}
\bibfield{author}{\bibinfo{person}{Arnab Das}, \bibinfo{person}{Ian Briggs},
  \bibinfo{person}{Ganesh Gopalakrishnan}, \bibinfo{person}{Sriram
  Krishnamoorthy}, {and} \bibinfo{person}{Pavel Panchekha}.}
  \bibinfo{year}{2020}\natexlab{}.
\newblock \showarticletitle{Scalable yet Rigorous Floating-Point Error
  Analysis}. In \bibinfo{booktitle}{\emph{2020 SC20: International Conference
  for High Performance Computing, Networking, Storage and Analysis (SC)}}.
  \bibinfo{publisher}{IEEE Computer Society}, \bibinfo{address}{Los Alamitos,
  CA, USA}, \bibinfo{pages}{1--14}.
\newblock
\urldef\tempurl%
\url{https://doi.org/10.1109/SC41405.2020.00055}
\showDOI{\tempurl}


\bibitem[\protect\citeauthoryear{{European Commission}}{{European
  Commission}}{1998}]%
        {euro-rounding}
\bibfield{author}{\bibinfo{person}{{European Commission}}.}
  \bibinfo{year}{1998}\natexlab{}.
\newblock \bibinfo{booktitle}{\emph{The introduction of the euro and the
  rounding of currency amounts}}.
\newblock \bibinfo{publisher}{European Commission, Directorate General II
  Economic and Financial Affairs}.
\newblock


\bibitem[\protect\citeauthoryear{Fousse, Hanrot, Lef\`evre, P\'elissier, and
  Zimmermann}{Fousse et~al\mbox{.}}{2007}]%
        {mpfr}
\bibfield{author}{\bibinfo{person}{Laurent Fousse}, \bibinfo{person}{Guillaume
  Hanrot}, \bibinfo{person}{Vincent Lef\`evre}, \bibinfo{person}{Patrick
  P\'elissier}, {and} \bibinfo{person}{Paul Zimmermann}.}
  \bibinfo{year}{2007}\natexlab{}.
\newblock \showarticletitle{{MPFR}: A Multiple-Precision Binary Floating-Point
  Library with Correct Rounding}.
\newblock \bibinfo{journal}{\emph{ACM Trans. Math. Software}}
  \bibinfo{volume}{33}, \bibinfo{number}{2} (\bibinfo{date}{June}
  \bibinfo{year}{2007}), \bibinfo{pages}{13:1--13:15}.
\newblock
\urldef\tempurl%
\url{http://doi.acm.org/10.1145/1236463.1236468}
\showURL{%
\tempurl}


\bibitem[\protect\citeauthoryear{Fu and Su}{Fu and Su}{2017}]%
        {coverme}
\bibfield{author}{\bibinfo{person}{Zhoulai Fu} {and} \bibinfo{person}{Zhendong
  Su}.} \bibinfo{year}{2017}\natexlab{}.
\newblock \showarticletitle{Achieving high coverage for floating-point code via
  unconstrained programming}.
\newblock \bibinfo{journal}{\emph{ACM SIGPLAN Notices}}  \bibinfo{volume}{52}
  (\bibinfo{date}{06} \bibinfo{year}{2017}), \bibinfo{pages}{306--319}.
\newblock
\urldef\tempurl%
\url{https://doi.org/10.1145/3140587.3062383}
\showDOI{\tempurl}


\bibitem[\protect\citeauthoryear{Goualard}{Goualard}{2019}]%
        {gaol}
\bibfield{author}{\bibinfo{person}{Frederic Goualard}.}
  \bibinfo{year}{2019}\natexlab{}.
\newblock \bibinfo{title}{Gaol: NOT Just Another Interval Library}.
\newblock
\newblock
\urldef\tempurl%
\url{https://sourceforge.net/projects/gaol/}
\showURL{%
\tempurl}


\bibitem[\protect\citeauthoryear{Granvilliers and Benhamou}{Granvilliers and
  Benhamou}{2006}]%
        {realpaver}
\bibfield{author}{\bibinfo{person}{Laurent Granvilliers} {and}
  \bibinfo{person}{Fr\'{e}d\'{e}ric Benhamou}.}
  \bibinfo{year}{2006}\natexlab{}.
\newblock \showarticletitle{Algorithm 852: RealPaver: An Interval Solver Using
  Constraint Satisfaction Techniques}.
\newblock \bibinfo{journal}{\emph{ACM Trans. Math. Softw.}}
  \bibinfo{volume}{32}, \bibinfo{number}{1} (\bibinfo{date}{March}
  \bibinfo{year}{2006}), \bibinfo{pages}{138–156}.
\newblock
\showISSN{0098-3500}
\urldef\tempurl%
\url{https://doi.org/10.1145/1132973.1132980}
\showDOI{\tempurl}


\bibitem[\protect\citeauthoryear{Hamming}{Hamming}{1987}]%
        {book87-nmse}
\bibfield{author}{\bibinfo{person}{Richard Hamming}.}
  \bibinfo{year}{1987}\natexlab{}.
\newblock \bibinfo{booktitle}{\emph{Numerical Methods for Scientists and
  Engineers} (\bibinfo{edition}{2nd} ed.)}.
\newblock \bibinfo{publisher}{Dover Publications}.
\newblock


\bibitem[\protect\citeauthoryear{Hui~Guo}{Hui~Guo}{2020}]%
        {fpgen}
\bibfield{author}{\bibinfo{person}{Cindy Rubio-{G}onz\'alez Hui~Guo}.}
  \bibinfo{year}{2020}\natexlab{}.
\newblock \showarticletitle{Efficient Generation of Error-Inducing
  Floating-Point Inputs via Symbolic Execution}
  \emph{(\bibinfo{series}{ICSE'20})}.
\newblock


\bibitem[\protect\citeauthoryear{Izycheva and Darulova}{Izycheva and
  Darulova}{2017}]%
        {daisy}
\bibfield{author}{\bibinfo{person}{Anastasiia Izycheva} {and}
  \bibinfo{person}{Eva Darulova}.} \bibinfo{year}{2017}\natexlab{}.
\newblock \showarticletitle{On sound relative error bounds for floating-point
  arithmetic} \emph{(\bibinfo{series}{{FMCAD}})}. \bibinfo{pages}{15--22}.
\newblock
\urldef\tempurl%
\url{https://doi.org/10.23919/FMCAD.2017.8102236}
\showDOI{\tempurl}


\bibitem[\protect\citeauthoryear{Jaulin, Kieffer, Didrit, and Walter}{Jaulin
  et~al\mbox{.}}{2001}]%
        {applied-interval-analysis}
\bibfield{author}{\bibinfo{person}{L. Jaulin}, \bibinfo{person}{M. Kieffer},
  \bibinfo{person}{O. Didrit}, {and} \bibinfo{person}{E. Walter}.}
  \bibinfo{year}{2001}\natexlab{}.
\newblock \bibinfo{booktitle}{\emph{Applied Interval Analysis: With Examples in
  Parameter and State Estimation, Robust Control and Robotics}}.
\newblock \bibinfo{publisher}{Springer London}.
\newblock
\showISBNx{9781852332198}
\showLCCN{2001020164}
\urldef\tempurl%
\url{https://books.google.com/books?id=ZG0qXkYUe\_AC}
\showURL{%
\tempurl}


\bibitem[\protect\citeauthoryear{Kahan}{Kahan}{2000}]%
        {berkeley00-needle-like}
\bibfield{author}{\bibinfo{person}{William Kahan}.}
  \bibinfo{year}{2000}\natexlab{}.
\newblock \bibinfo{booktitle}{\emph{Miscalculating Area and Angles of a
  Needle-like Triangle}}.
\newblock \bibinfo{type}{{T}echnical {R}eport}.
  \bibinfo{institution}{University of California, Berkeley}.
\newblock
\urldef\tempurl%
\url{http://www.cs.berkeley.edu/~wkahan/Triangle.pdf}
\showURL{%
\tempurl}


\bibitem[\protect\citeauthoryear{Kahan and Darcy}{Kahan and Darcy}{1998}]%
        {kahan-java-hurts}
\bibfield{author}{\bibinfo{person}{W. Kahan} {and} \bibinfo{person}{Joseph~D.
  Darcy}.} \bibinfo{year}{1998}\natexlab{}.
\newblock \bibinfo{booktitle}{\emph{How {Java}'s Floating-Point Hurts Everyone
  Everywhere}}.
\newblock \bibinfo{type}{Technical Report}. \bibinfo{institution}{University of
  California, Berkeley}. \bibinfo{pages}{80} pages.
\newblock
\urldef\tempurl%
\url{http://www.cs.berkeley.edu/~wkahan/JAVAhurt.pdf}
\showURL{%
\tempurl}


\bibitem[\protect\citeauthoryear{Lee and Boehm}{Lee and Boehm}{1990a}]%
        {interval-constructive}
\bibfield{author}{\bibinfo{person}{Vernon~A. Lee} {and}
  \bibinfo{person}{Hans-J. Boehm}.} \bibinfo{year}{1990}\natexlab{a}.
\newblock \showarticletitle{Optimizing Programs over the Constructive Reals}
  \emph{(\bibinfo{series}{PLDI ’90})}. \bibinfo{address}{New York, NY, USA}.
\newblock
\urldef\tempurl%
\url{https://doi.org/10.1145/93542.93558}
\showDOI{\tempurl}


\bibitem[\protect\citeauthoryear{Lee and Boehm}{Lee and Boehm}{1990b}]%
        {boehm-fast}
\bibfield{author}{\bibinfo{person}{Vernon~A. Lee, Jr.} {and}
  \bibinfo{person}{Hans~J. Boehm}.} \bibinfo{year}{1990}\natexlab{b}.
\newblock \showarticletitle{Optimizing Programs over the Constructive Reals}
  \emph{(\bibinfo{series}{PLDI '90})}. \bibinfo{publisher}{ACM},
  \bibinfo{pages}{102--111}.
\newblock
\showISBNx{0-89791-364-7}
\urldef\tempurl%
\url{https://doi.org/10.1145/93542.93558}
\showDOI{\tempurl}


\bibitem[\protect\citeauthoryear{Li and Yong}{Li and Yong}{2007}]%
        {constructive-real-tweaks}
\bibfield{author}{\bibinfo{person}{Yong Li} {and} \bibinfo{person}{Jun-Hai
  Yong}.} \bibinfo{year}{2007}\natexlab{}.
\newblock \showarticletitle{{Efficient Exact Arithmetic over Constructive
  Reals}}. In \bibinfo{booktitle}{\emph{{The 4th Annual Conference on Theory
  and Applications of Models of Computation}}}. \bibinfo{address}{Shanghai,
  China}.
\newblock
\urldef\tempurl%
\url{https://hal.inria.fr/inria-00517598}
\showURL{%
\tempurl}


\bibitem[\protect\citeauthoryear{Magron, Constantinides, and Donaldson}{Magron
  et~al\mbox{.}}{2017}]%
        {real2float}
\bibfield{author}{\bibinfo{person}{Victor Magron}, \bibinfo{person}{George
  Constantinides}, {and} \bibinfo{person}{Alastair Donaldson}.}
  \bibinfo{year}{2017}\natexlab{}.
\newblock \showarticletitle{Certified roundoff error bounds using semidefinite
  programming}.
\newblock \bibinfo{journal}{\emph{Transactions on Mathematical Software}}
  \bibinfo{volume}{43}, \bibinfo{number}{4} (\bibinfo{year}{2017}),
  \bibinfo{pages}{34}.
\newblock


\bibitem[\protect\citeauthoryear{Martel}{Martel}{2009}]%
        {salsa-1}
\bibfield{author}{\bibinfo{person}{Matthieu Martel}.}
  \bibinfo{year}{2009}\natexlab{}.
\newblock \showarticletitle{Program Transformation for Numerical Precision}
  \emph{(\bibinfo{series}{PEPM '09})}. \bibinfo{publisher}{ACM},
  \bibinfo{address}{New York, NY, USA}, \bibinfo{pages}{101--110}.
\newblock
\showISBNx{978-1-60558-327-3}
\urldef\tempurl%
\url{http://doi.acm.org/10.1145/1480945.1480960}
\showURL{%
\tempurl}


\bibitem[\protect\citeauthoryear{Martel}{Martel}{2018}]%
        {martel-types}
\bibfield{author}{\bibinfo{person}{Matthieu Martel}.}
  \bibinfo{year}{2018}\natexlab{}.
\newblock \showarticletitle{Strongly Typed Numerical Computations}.
\newblock \bibinfo{journal}{\emph{ICFEM}} (\bibinfo{year}{2018}),
  \bibinfo{pages}{197--214}.
\newblock


\bibitem[\protect\citeauthoryear{Mascarenhas}{Mascarenhas}{2016}]%
        {moore-ivals}
\bibfield{author}{\bibinfo{person}{Walter~F. Mascarenhas}.}
  \bibinfo{year}{2016}\natexlab{}.
\newblock \showarticletitle{Moore: Interval Arithmetic in Modern {C++}}.
\newblock \bibinfo{journal}{\emph{CoRR}}  \bibinfo{volume}{abs/1611.09567}
  (\bibinfo{year}{2016}).
\newblock
\showeprint[arxiv]{1611.09567}
\urldef\tempurl%
\url{http://arxiv.org/abs/1611.09567}
\showURL{%
\tempurl}


\bibitem[\protect\citeauthoryear{McCullough and Vinod}{McCullough and
  Vinod}{1999}]%
        {distort-stock}
\bibfield{author}{\bibinfo{person}{B.~D. McCullough} {and}
  \bibinfo{person}{H.~D. Vinod}.} \bibinfo{year}{1999}\natexlab{}.
\newblock \showarticletitle{The Numerical Reliability of Econometric Software}.
\newblock \bibinfo{journal}{\emph{Journal of Economic Literature}}
  \bibinfo{volume}{37}, \bibinfo{number}{2} (\bibinfo{year}{1999}),
  \bibinfo{pages}{633--665}.
\newblock


\bibitem[\protect\citeauthoryear{Melquiond}{Melquiond}{2006}]%
        {boost-ivals}
\bibfield{author}{\bibinfo{person}{Guillaume Melquiond}.}
  \bibinfo{year}{2006}\natexlab{}.
\newblock \bibinfo{title}{Boost Interval Arithmetic Library}.
\newblock
\newblock
\urldef\tempurl%
\url{https://www.boost.org/doc/libs/1_73_0/libs/numeric/interval/doc/interval.htm}
\showURL{%
\tempurl}


\bibitem[\protect\citeauthoryear{Minsky}{Minsky}{1967}]%
        {computable-reals}
\bibfield{author}{\bibinfo{person}{Marvin~L. Minsky}.}
  \bibinfo{year}{1967}\natexlab{}.
\newblock \bibinfo{booktitle}{\emph{Computation: Finite and Infinite
  Machines}}.
\newblock \bibinfo{publisher}{Prentice-Hall, Inc.}
\newblock
\showISBNx{0-13-165563-9}


\bibitem[\protect\citeauthoryear{Ménissier-Morain}{Ménissier-Morain}{2005}]%
        {constructive-real-verified}
\bibfield{author}{\bibinfo{person}{Valérie Ménissier-Morain}.}
  \bibinfo{year}{2005}\natexlab{}.
\newblock \showarticletitle{Arbitrary precision real arithmetic: design and
  algorithms}.
\newblock \bibinfo{journal}{\emph{The Journal of Logic and Algebraic
  Programming}} \bibinfo{volume}{64}, \bibinfo{number}{1}
  (\bibinfo{year}{2005}), \bibinfo{pages}{13 -- 39}.
\newblock
\showISSN{1567-8326}
\urldef\tempurl%
\url{https://doi.org/10.1016/j.jlap.2004.07.003}
\showDOI{\tempurl}
\newblock
\shownote{Practical development of exact real number computation.}


\bibitem[\protect\citeauthoryear{Panchekha, Sanchez-Stern, Wilcox, and
  Tatlock}{Panchekha et~al\mbox{.}}{2015}]%
        {herbie}
\bibfield{author}{\bibinfo{person}{Pavel Panchekha}, \bibinfo{person}{Alex
  Sanchez-Stern}, \bibinfo{person}{James~R. Wilcox}, {and}
  \bibinfo{person}{Zachary Tatlock}.} \bibinfo{year}{2015}\natexlab{}.
\newblock \showarticletitle{Automatically Improving Accuracy for Floating Point
  Expressions} \emph{(\bibinfo{series}{PLDI})}.
\newblock


\bibitem[\protect\citeauthoryear{Revol and Rouillier}{Revol and
  Rouillier}{2005}]%
        {mpfi}
\bibfield{author}{\bibinfo{person}{Nathalie Revol} {and}
  \bibinfo{person}{Fabrice Rouillier}.} \bibinfo{year}{2005}\natexlab{}.
\newblock \showarticletitle{Motivations for an Arbitrary Precision Interval
  Arithmetic and the MPFI Library}.
\newblock \bibinfo{journal}{\emph{Reliable Computing}} \bibinfo{volume}{11},
  \bibinfo{number}{4} (\bibinfo{year}{2005}), \bibinfo{pages}{275--290}.
\newblock
\urldef\tempurl%
\url{https://doi.org/10.1007/s11155-005-6891-y}
\showDOI{\tempurl}


\bibitem[\protect\citeauthoryear{Rubio-Gonz{\'a}lez~\textit{et
  al.}}{Rubio-Gonz{\'a}lez~\textit{et al.}}{2013}]%
        {precimonious}
\bibfield{author}{\bibinfo{person}{Cindy Rubio-Gonz{\'a}lez~\textit{et al.}}}
  \bibinfo{year}{2013}\natexlab{}.
\newblock \showarticletitle{Precimonious: Tuning assistant for floating-point
  precision} \emph{(\bibinfo{series}{{SC}})}. IEEE, \bibinfo{pages}{1--12}.
\newblock


\bibitem[\protect\citeauthoryear{Sanchez-Stern, Panchekha, Lerner, and
  Tatlock}{Sanchez-Stern et~al\mbox{.}}{2018}]%
        {herbgrind}
\bibfield{author}{\bibinfo{person}{Alex Sanchez-Stern}, \bibinfo{person}{Pavel
  Panchekha}, \bibinfo{person}{Sorin Lerner}, {and} \bibinfo{person}{Zachary
  Tatlock}.} \bibinfo{year}{2018}\natexlab{}.
\newblock \showarticletitle{Finding Root Causes of Floating Point Error}
  \emph{(\bibinfo{series}{{PLDI}})}. \bibinfo{pages}{256--269}.
\newblock
\urldef\tempurl%
\url{https://doi.org/10.1145/3192366.3192411}
\showDOI{\tempurl}


\bibitem[\protect\citeauthoryear{Solovyev, Jacobsen, Rakamaric, and
  Gopalakrishnan}{Solovyev et~al\mbox{.}}{2015}]%
        {fptaylor}
\bibfield{author}{\bibinfo{person}{Alexey Solovyev}, \bibinfo{person}{Charlie
  Jacobsen}, \bibinfo{person}{Zvonimir Rakamaric}, {and}
  \bibinfo{person}{Ganesh Gopalakrishnan}.} \bibinfo{year}{2015}\natexlab{}.
\newblock \showarticletitle{Rigorous Estimation of Floating-Point Round-off
  Errors with Symbolic Taylor Expansions} \emph{(\bibinfo{series}{FM})}.
\newblock


\bibitem[\protect\citeauthoryear{Tang, Barr, Li, and Su}{Tang
  et~al\mbox{.}}{2010}]%
        {perturbing-numerical}
\bibfield{author}{\bibinfo{person}{Enyi Tang}, \bibinfo{person}{Earl Barr},
  \bibinfo{person}{Xuandong Li}, {and} \bibinfo{person}{Zhendong Su}.}
  \bibinfo{year}{2010}\natexlab{}.
\newblock \showarticletitle{Perturbing Numerical Calculations for Statistical
  Analysis of Floating-Point Program (in)Stability}. In
  \bibinfo{booktitle}{\emph{Proceedings of the 19th International Symposium on
  Software Testing and Analysis}} (Trento, Italy) \emph{(\bibinfo{series}{ISSTA
  ’10})}. \bibinfo{publisher}{Association for Computing Machinery},
  \bibinfo{address}{New York, NY, USA}, \bibinfo{pages}{131–142}.
\newblock
\showISBNx{9781605588230}
\urldef\tempurl%
\url{https://doi.org/10.1145/1831708.1831724}
\showDOI{\tempurl}


\bibitem[\protect\citeauthoryear{Titolo, Feli{\'u}, Moscato, and Munoz}{Titolo
  et~al\mbox{.}}{2018}]%
        {precisa}
\bibfield{author}{\bibinfo{person}{Laura Titolo}, \bibinfo{person}{Marco~A
  Feli{\'u}}, \bibinfo{person}{Mariano Moscato}, {and}
  \bibinfo{person}{C{\'e}sar~A Munoz}.} \bibinfo{year}{2018}\natexlab{}.
\newblock \showarticletitle{An Abstract Interpretation Framework for the
  Round-Off Error Analysis of Floating-Point Programs}
  \emph{(\bibinfo{series}{{VMCAI}})}. \bibinfo{pages}{516--537}.
\newblock


\bibitem[\protect\citeauthoryear{Toronto and McCarthy}{Toronto and
  McCarthy}{2014}]%
        {cse14-practical-fp}
\bibfield{author}{\bibinfo{person}{N. Toronto} {and} \bibinfo{person}{J.
  McCarthy}.} \bibinfo{year}{2014}\natexlab{}.
\newblock \showarticletitle{Practically Accurate Floating-Point Math}.
\newblock \bibinfo{journal}{\emph{Computing in Science Engineering}}
  \bibinfo{volume}{16}, \bibinfo{number}{4} (\bibinfo{date}{July}
  \bibinfo{year}{2014}), \bibinfo{pages}{80--95}.
\newblock


\bibitem[\protect\citeauthoryear{{U.S. General Accounting Office}}{{U.S.
  General Accounting Office}}{1992}]%
        {patriot}
\bibfield{author}{\bibinfo{person}{{U.S. General Accounting Office}}.}
  \bibinfo{year}{1992}\natexlab{}.
\newblock \bibinfo{title}{Patriot Missile Defense: Software Problem Led to
  System Failure at Dhahran, Saudi Arabia}.
\newblock
\newblock
\urldef\tempurl%
\url{http://www.gao.gov/products/IMTEC-92-26}
\showURL{%
\tempurl}


\bibitem[\protect\citeauthoryear{Weber-Wulff}{Weber-Wulff}{1992}]%
        {round-elections}
\bibfield{author}{\bibinfo{person}{Debora Weber-Wulff}.}
  \bibinfo{year}{1992}\natexlab{}.
\newblock \bibinfo{title}{Rounding error changes Parliament makeup}.
\newblock
\newblock
\urldef\tempurl%
\url{http://catless.ncl.ac.uk/Risks/13.37.html\#subj4}
\showURL{%
\tempurl}


\bibitem[\protect\citeauthoryear{Wolfram}{Wolfram}{2020}]%
        {mathematica-maxextraprecision}
\bibfield{author}{\bibinfo{person}{Wolfram}.} \bibinfo{year}{2020}\natexlab{}.
\newblock \bibinfo{title}{\$MaxExtraPrecision---Wolfram Language
  Documentation}.
\newblock
\newblock
\urldef\tempurl%
\url{https://reference.wolfram.com/language/ref/\$MaxExtraPrecision.html}
\showURL{%
\tempurl}


\bibitem[\protect\citeauthoryear{{WR-SWT}}{{WR-SWT}}{2020}]%
        {xsc}
\bibfield{author}{\bibinfo{person}{{WR-SWT}}.} \bibinfo{year}{2020}\natexlab{}.
\newblock \bibinfo{title}{History of {XSC}-{L}anguages and Credits}.
\newblock
\newblock
\urldef\tempurl%
\url{http://www.xsc.de}
\showURL{%
\tempurl}


\bibitem[\protect\citeauthoryear{{Yi}, {Chen}, {Mao}, and {Ji}}{{Yi}
  et~al\mbox{.}}{2017}]%
        {eagt}
\bibfield{author}{\bibinfo{person}{X. {Yi}}, \bibinfo{person}{L. {Chen}},
  \bibinfo{person}{X. {Mao}}, {and} \bibinfo{person}{T. {Ji}}.}
  \bibinfo{year}{2017}\natexlab{}.
\newblock \showarticletitle{Efficient Global Search for Inputs Triggering High
  Floating-Point Inaccuracies}. In \bibinfo{booktitle}{\emph{2017 24th
  Asia-Pacific Software Engineering Conference (APSEC)}}.
  \bibinfo{pages}{11--20}.
\newblock


\bibitem[\protect\citeauthoryear{Yi, Chen, Mao, and Ji}{Yi
  et~al\mbox{.}}{2019}]%
        {autornp}
\bibfield{author}{\bibinfo{person}{Xin Yi}, \bibinfo{person}{Liqian Chen},
  \bibinfo{person}{Xiaoguang Mao}, {and} \bibinfo{person}{Tao Ji}.}
  \bibinfo{year}{2019}\natexlab{}.
\newblock \showarticletitle{Efficient Automated Repair of High Floating-Point
  Errors in Numerical Libraries}.
\newblock \bibinfo{journal}{\emph{Proc. ACM Program. Lang.}}
  \bibinfo{volume}{3}, \bibinfo{number}{POPL}, Article \bibinfo{articleno}{56}
  (\bibinfo{date}{Jan.} \bibinfo{year}{2019}), \bibinfo{numpages}{29}~pages.
\newblock
\urldef\tempurl%
\url{https://doi.org/10.1145/3290369}
\showDOI{\tempurl}


\bibitem[\protect\citeauthoryear{{Zou}, {Wang}, {Xiong}, {Zhang}, {Su}, and
  {Mei}}{{Zou} et~al\mbox{.}}{2015}]%
        {lsga}
\bibfield{author}{\bibinfo{person}{D. {Zou}}, \bibinfo{person}{R. {Wang}},
  \bibinfo{person}{Y. {Xiong}}, \bibinfo{person}{L. {Zhang}},
  \bibinfo{person}{Z. {Su}}, {and} \bibinfo{person}{H. {Mei}}.}
  \bibinfo{year}{2015}\natexlab{}.
\newblock \showarticletitle{A Genetic Algorithm for Detecting Significant
  Floating-Point Inaccuracies}. In \bibinfo{booktitle}{\emph{2015 IEEE/ACM 37th
  IEEE International Conference on Software Engineering}},
  Vol.~\bibinfo{volume}{1}. \bibinfo{pages}{529--539}.
\newblock


\end{thebibliography}

\end{document}